\documentclass[11pt]{article}
\pdfoutput=1
\usepackage[T1]{fontenc}
\usepackage[utf8]{inputenc}
\usepackage[russian, french, english]{babel}
\usepackage[margin=0.8in]{geometry}
\usepackage{amsmath,amssymb,amstext}
\usepackage{amsthm}
\usepackage{authblk}
\usepackage{enumerate}
\usepackage{enumitem}
\usepackage{dirtytalk}
\usepackage{tikz}
\usepackage{thmtools}
\usepackage{thm-restate}
\usepackage{multicol}
\usetikzlibrary{shapes}
\usepackage{xspace}
\usepackage{xcolor}
\usepackage{tikz}
\usepackage{thmtools}
\usepackage{thm-restate}
\usetikzlibrary{shapes}
\usepackage{xcolor}
\usepackage[spacing=true,kerning=true,babel=true]{microtype}
\usepackage[justification=centering, labelfont=bf, font=small, width=.75\textwidth]{caption}
\usepackage{framed}
\usepackage{mathtools}
\usepackage{lmodern}
\usepackage{algorithm}
\usepackage{algpseudocode}
\usepackage{multicol}
\usepackage[hidelinks]{hyperref}
\usepackage[backend=biber, style=alphabetic, sorting=nyt, maxnames=100,backref=true,sortcites]{biblatex}
\addbibresource{references.bib}

\newtheoremstyle{slantthm}%
{}{}%
{\slshape}{}%
{\bfseries}{\bfseries.}%
{ }%
{\thmname{#1}\thmnumber{ #2}\thmnote{ \ep{\normalfont{}#3}}}

\newtheoremstyle{claim}%
{}{}%
{\slshape}{}%
{\itshape}{.}%
{ }%
{\thmname{#1}\thmnumber{ #2}\thmnote{ \ep{\normalfont{}#3}}}

\theoremstyle{slantthm}
\newtheorem{thm}{Theorem}[section]

\newtheorem{lemma}[thm]{Lemma}

\newtheorem{claim}{Claim}

\newtheorem*{false}{``Lemma''}

\numberwithin{equation}{section}
\theoremstyle{definition} 
\newtheorem{definition}[thm]{Definition}

\tikzset{black/.style={shape=circle,draw=black,fill=black,inner sep=1pt, minimum size=9pt}}
\tikzset{white/.style={shape=circle,draw=black,fill=white,inner sep=1pt, minimum size=9pt}}
\tikzset{invisible/.style={shape=circle,draw=black,fill=black,inner sep=0pt, minimum size=0.1pt}}
\tikzset{fg/.style={shape=circle,draw=black,fill=ForestGreen,inner sep=1pt, minimum size=9pt}}
\tikzset{fuchsia/.style={shape=circle,draw=black,fill=Fuchsia,inner sep=1pt, minimum size=9pt}}
\tikzset{blue/.style={shape=circle,draw=black,fill=BlueViolet,inner sep=0pt, minimum size=9pt}}
\tikzset{sepia/.style={shape=circle,draw=black,fill=Sepia,inner sep=0pt, minimum size=9pt}}
\tikzset{peri/.style={shape=circle,draw=black,fill=Periwinkle,inner sep=0pt, minimum size=9pt}}
\tikzset{gold/.style={shape=circle,draw=black,fill=Goldenrod,inner sep=0pt, minimum size=9pt}}
\tikzset{pink/.style={shape=circle,draw=black,fill=WildStrawberry,inner sep=0pt, minimum size=9pt}}
\tikzset{starnode/.style={inner sep=1pt, minimum size=13pt, star,star points=4,star point ratio=0.5, draw, fill=white}}

%
%

\definecolor{asparagus}{rgb}{0.53, 0.66, 0.42}

\definecolor{cr}{rgb}{0.7, 0.11, 0.11}

\definecolor{lav}{rgb}{0.45, 0.31, 0.59}

\date{}

\renewcommand{\phi}{\varphi}
\renewcommand{\theta}{\vartheta}
\renewcommand{\epsilon}{\varepsilon}
\renewcommand{\leq}{\leqslant}
\renewcommand{\geq}{\geqslant}
\newcommand{\defeq}{\coloneqq}

\newcommand{\bemph}[1]{{\normalfont#1}}
\newcommand{\ep}[1]{\bemph{(}#1\bemph{)}}

\newcommand{\dom}{\mathsf{dom}}
\numberwithin{equation}{section}
\newcommand{\N}{\mathbb{N}}
\newcommand{\Z}{\mathbb{Z}}
\newcommand{\0}{\varnothing}
\newcommand{\set}[1]{\{#1\}}
\newcommand{\emphd}[1]{{\fontseries{b}\selectfont\textsf{#1}}}
\newcommand{\de}{\mathsf{d}}
\newcommand{\wde}{\mathsf{wd}}
\newcommand{\del}{\mathsf{Delete}}
\newcommand{\delsave}{\mathsf{DelSave}}
\newcommand{\blank}{\mathsf{blank}}

\newcommand{\bdry}[1]{{\partial #1}}

\newcommand\blfootnote[1]{%
  \begingroup
  \renewcommand\thefootnote{}\footnote{#1}%
  \addtocounter{footnote}{-1}%
  \endgroup
}

\renewbibmacro{in:}{}

\renewbibmacro*{volume+number+eid}{%
	\printfield{volume}%
	\setunit*{\addnbspace}
	\printfield{number}%
	\setunit{\addcomma\space}%
	\printfield{eid}}

\DeclareFieldFormat[article]{volume}{\textbf{#1}\space}
\DeclareFieldFormat[article]{number}{\mkbibparens{#1}}

\DeclareFieldFormat{journaltitle}{#1,}
\DeclareFieldFormat[thesis]{title}{\mkbibemph{#1}\addperiod}
\DeclareFieldFormat[article, unpublished, thesis]{title}{\mkbibemph{#1},}
\DeclareFieldFormat[book]{title}{\mkbibemph{#1}\addperiod}
\DeclareFieldFormat[unpublished]{howpublished}{#1, }

\DeclareFieldFormat{pages}{#1}

\DeclareFieldFormat[article]{series}{#1\addcomma}

\definecolor{nicelavender}{RGB}{153, 128, 250}

\definecolor{xkcd_fern}{RGB}{99, 169, 80}

\begin{document}


\author{Anton~Bernshteyn$^\star$}
\author{Eugene Lee$^\flat$}
\author{Evelyne Smith-Roberge$^\dagger$}
\affil{$^\star$Department of Mathematics, University of California, Los Angeles \\ \texttt{bernshteyn@math.ucla.edu}}
\affil{$^\flat$Independent Researcher}

\affil{$^\dagger$School of Mathematics, Georgia Institute of Technology \\ \texttt{esmithroberge3@gatech.edu}}

\title{Weak Degeneracy of Planar Graphs}
\date{}

\maketitle

\blfootnote{Research of the first named author is partially supported by the NSF CAREER grant DMS-2528522.}



\begin{abstract}
    \noindent The weak degeneracy of a graph $G$ is a numerical parameter that was recently introduced by the first two authors with the aim of understanding the power of greedy algorithms for graph coloring. Every $d$-degenerate graph is weakly $d$-degenerate, but the converse is not true in general (for example, all connected $d$-regular graphs except cycles and cliques are weakly $(d-1)$-degenerate). If $G$ is weakly $d$-degenerate, then the list-chromatic number of $G$ is at most $d+1$, and the same upper bound holds for various other parameters such as the DP-chromatic number and the paint number. Here we rectify a mistake in a paper of the first two authors and give a correct proof that planar graphs are weakly $4$-degenerate, strengthening the famous result of Thomassen that planar graphs are $5$-list-colorable. 
\end{abstract}


\section{Introduction}\label{sec:intro}

    All graphs in this paper are finite and simple. We use $\N \defeq \set{0,1,2,\ldots}$ to denote the set of all nonnegative integers, and for $k \in \N$, we let $[k] \defeq \set{i \in \N \,:\, 1 \leq i \leq k}$. Given $d \in \N$, a graph $G$ is \emphd{$d$-degenerate} if the vertices of $G$ can be ordered so that each vertex is preceded by at most $d$ of its neighbors. The \emphd{degeneracy} of $G$, denoted by $\de(G)$, is the least $d \in \N$ such that $G$ is $d$-degenerate. The following simple greedy algorithm shows that the chromatic number of $G$, $\chi(G)$, is at most $\de(G) + 1$:
    
    \begin{algorithm}[h]
    \caption{Greedy coloring}\label{alg:greedy}
    \begin{algorithmic}[1]

        \medskip
    
        \Statex \textbf{Input:} An $n$-vertex graph $G$ and an integer $d \in \N$.

        \medskip
        
        \State $L(v) \gets [d+1]$ for all $v \in V(G)$.
        \For{$i \gets 1$ to $n$}
            \State Pick a vertex $u_i \in V(G)$. \label{line:choice} 
                \State Assign to $u_i$ an arbitrary color $c_i \in L(u_i)$.
            \State Remove $c_i$ from $L(v)$ for all $v \in N_G(u_i)$.
            \State $G \gets G - u_i$.
        \EndFor
    \end{algorithmic}
\end{algorithm}

    \noindent At the start of the $i$-th iteration of the \textsf{for} loop, the set $L(u_i)$ contains all the colors from $[d+1]$ that have not yet been assigned to any neighbors of $u_i$. Therefore, if the ordering $u_1$, \ldots, $u_n$ witnesses the bound $\de(G) \leq d$, then at the $i$-th iteration of the \textsf{for} loop, the set $L(u_i)$ will be nonempty, and thus the algorithm will successfully generate a proper $(d+1)$-coloring of $G$. 
    
    Unfortunately, the upper bound $\chi(G) \leq \de(G) + 1$ is rarely sharp. For example, every $d$-regular graph $G$ satisfies $\de(G) = d$, but, by a theorem of Brooks, the only connected $d$-regular graphs with $\chi(G) = d + 1$ are cliques and odd cycles \cites{Brooks}[Theorem~5.2.4]{Diestel}. To address this issue, the first two authors considered in \cite{WD} a variant of Algorithm~\ref{alg:greedy} in which a vertex $u_i$ may attempt to ``save'' a color for one of its neighbors, $w_i$. In what follows, we use $\blank$ as a special symbol distinct from every vertex of $G$.

    \begin{algorithm}[h]
    \caption{Greedy coloring with savings}\label{alg:greedy_save}
    \begin{algorithmic}[1]
        \medskip
    
        \Statex \textbf{Input:} An $n$-vertex graph $G$ and an integer $d \in \N$.

        \medskip
        
        \State $L(v) \gets [d+1]$ for all $v \in V(G)$.
        \For{$i \gets 1$ to $n$}
            \State Pick $u_i \in V(G)$ and $w_i \in N_G(u_i) \cup \set{\blank}$.
            \If{$w_i \neq \blank$ and $|L(u_i)| > |L(w_i)|$}
                \State Assign to $u_i$ an arbitrary color $c_i \in L(u_i) \setminus L(w_i)$. \label{step:save}
            \Else
                \State Assign to $u_i$ an arbitrary color $c_i \in L(u_i)$.
            \EndIf
            \State Remove $c_i$ from $L(v)$ for all $v \in N_G(u_i)$.
            \State $G \gets G - u_i$.
        \EndFor
    \end{algorithmic}
    \end{algorithm}
    
    \noindent The assumption that $|L(u_i)| > |L(w_i)|$ guarantees that in line~\ref{step:save}, the set $L(u_i) \setminus L(w_i)$ is nonempty. As a result, if during the $i$-th iteration of the \textsf{for} loop the algorithm reaches line~\ref{step:save}, then the set $L(w_i)$ does not shrink at this iteration, even though $w_i$ is adjacent to $u_i$. By keeping track of a lower bound on the size of $L(v)$ for every vertex $v$ throughout the execution of Algorithm~\ref{alg:greedy_save}, we arrive at the following definition:

    \begin{definition}[{Weak degeneracy \cite{WD}}]
        Let $G$ be a graph and let $f \colon V(G) \to \N$. Given $u \in V(G)$ and $w \in N_G(u) \cup \set{\blank}$, we let $\delsave(G,f,u,w) \defeq (G - u,f')$, where $f' \colon V(G - u) \to \Z$ is given by
            \[
                f'(v) \,\defeq\, \begin{cases}
                    f(v) &\text{if } v \notin N_G(u) \text{ or } \big(v = w \text{ and } f(u) > f(w)\big),\\
                    f(v) - 1 &\text{otherwise}.
                \end{cases}
            \]
        An application of the $\delsave$ operation is \emphd{legal} if the resulting function $f'$ is non-negative. For clarity, we write $\del(G,f,u) \defeq \delsave(G,f,u,\blank)$. A graph $G$ is \emphd{weakly $f$-degenerate} if, starting with $(G,f)$, it is possible to remove all vertices from $G$ by a sequence of legal applications of the $\delsave$ operation. The \emphd{weak degeneracy} of $G$, denoted by $\wde(G)$, is the minimum $d \in \N$ such that $G$ is weakly degenerate with respect to the constant $d$ function. If $\wde(G) \leq d$ for some $d \in \N$, we say $G$ is \emphd{weakly $d$-degenerate}.
        
        When $G$ and $f$ are clear from the context, we may simply write $\delsave(u,w)$ for $\delsave(G,f,u,w)$ and $\del(u)$ for $\del(G,f,u)$.
    \end{definition}

    We remark that our notation is slightly different but essentially equivalent to the one in \cite{WD} (there the operations $\del$ and $\delsave$ are defined separately).

    The degeneracy of $G$ is the least $d$ such that starting with the constant $d$ function, it is possible to remove all vertices from $G$ via a sequence of legal applications of the $\del$ operation (i.e., by only using the $\delsave$ operation with $\blank$ as the last argument). It follows that $\wde(G) \leq \de(G)$. On the other hand, the above discussion of Algorithm~\ref{alg:greedy_save} shows that $\chi(G) \leq \wde(G) + 1$ for every graph $G$. Moreover, it was proved in \cite{WD} that $\wde(G) + 1$ is an upper bound on a number of other coloring-related parameters:

    \begin{thm}[{AB--EL \cite[Proposition 1.3]{WD}}]\label{thm:wd+1}
        If $G$ is a graph, then $\wde(G) + 1$ is an upper bound on $\chi(G)$, the chromatic number of $G$; $\chi_\ell(G)$, the list-chromatic number of $G$; $\chi_{DP}(G)$, the DP-chromatic number of $G$; $\chi_{P}(G)$, the paint number of $G$; and $\chi_{DPP}(G)$, the DP-paint number of $G$.
    \end{thm}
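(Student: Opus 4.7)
The plan is to adapt the greedy template of Algorithm~\ref{alg:greedy_save} to each of the five coloring parameters in turn. The common invariant is the following: if we fix a legal $\delsave$-sequence witnessing $\wde(G) \leq d$, then throughout its execution every surviving vertex $v$ satisfies $|L(v)| \geq f(v) + 1$, where $f$ denotes the current value of the weak-degeneracy function. Initializing $|L(v)| = d+1$ establishes the invariant at the outset, and legality (i.e.\ $f \geq 0$) guarantees $|L(v)| \geq 1$ whenever $v$ is about to be colored, so a valid color is always available.

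For $\chi(G)$ and $\chi_\ell(G)$ this is already the content of Algorithm~\ref{alg:greedy_save}: the same analysis works verbatim whether the lists are $[d+1]$ or arbitrary sets of size $d+1$, since the algorithm never uses that colors are drawn from $[d+1]$. For $\chi_{DP}(G)$, I would fix a DP-cover $(H,L)$ with $|L(v)| = d+1$ and modify the algorithm so that when $\delsave(u_i, w_i)$ is applied with $w_i \neq \blank$ and $f(u_i) > f(w_i)$, one selects a vertex $c \in L(u_i)$ that is not $H$-adjacent to any vertex of $L(w_i)$. Such a $c$ exists because the $H$-edges between $L(u_i)$ and $L(w_i)$ form a matching of size at most $|L(w_i)| < |L(u_i)|$. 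Deleting $c$ together with its $H$-neighbors from each $L(v)$ shrinks $|L(v)|$ by at most $1$, preserving the invariant exactly as in the classical case.

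For the paint number $\chi_P(G)$ and the DP-paint number $\chi_{DPP}(G)$ the argument must become online. Here Painter commits in advance to a $\delsave$-sequence $(u_1, w_1), \ldots, (u_n, w_n)$ and maintains the invariant $|L(v)| \geq f(v) + 1$, with $|L(v)|$ now interpreted as Painter's remaining budget at $v$. In each round, upon receiving Lister's presented set $S$ (together with the round's new matchings, in the DP-paint case), Painter scans the $\delsave$-sequence and activates those steps $i$ for which $u_i$ lies in $S$ and is still alive, coloring the resulting independent set; the corresponding updates to $f$ are designed to absorb precisely the decrements of $|L(v)|$ caused by $S$. The main obstacle is verifying that this online activation rule preserves the invariant round by round---in particular, that every $v \in S$ is accounted for by the $f$-update of some activated step and that the save option remains exercisable using only tokens revealed in the current round. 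Since the $\delsave$-sequence depends only on $G$ and not on Lister's play, Painter's strategy is well-defined, and the bookkeeping reduces to the same matching-of-sizes arguments as in the offline analysis.
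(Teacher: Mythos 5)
First, a point of orientation: the paper does not prove this statement itself---it is quoted from \cite[Proposition 1.3]{WD}, and the only argument given here is the discussion of Algorithm~\ref{alg:greedy_save}, which covers the $\chi(G)$ (and, with lists in place of $[d+1]$, the $\chi_\ell(G)$) case. Your treatment of $\chi$, $\chi_\ell$, and $\chi_{DP}$ is correct and is the standard argument: the invariant $|L(v)| \geq f(v) + 1$ is the right one, and your matching observation for the DP case is exactly what is needed. (One small point you should make explicit: when $f(u_i) > f(w_i)$ but $|L(u_i)| \leq |L(w_i)|$, the set $L(u_i) \setminus L(w_i)$ may be empty and no save is performed; the invariant at $w_i$ survives because then $|L(w_i)| \geq |L(u_i)| \geq f(u_i) + 1 \geq f(w_i) + 2$, so losing one color is harmless. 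The algorithm's explicit test $|L(u_i)| > |L(w_i)|$ is there precisely for this reason.) A more economical route, and the one taken in \cite{WD}, is to prove only $\chi_{DPP}(G) \leq \wde(G) + 1$ and deduce the other four bounds from the known inequalities $\chi \leq \chi_\ell \leq \chi_P \leq \chi_{DPP}$ and $\chi_{DP} \leq \chi_{DPP}$.

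The genuine gap is in the paint and DP-paint cases, and it is exactly the step you yourself flag as ``the main obstacle.'' Two concrete problems. First, the set of ``activated'' steps $\set{u_i : u_i \in S, u_i \text{ alive}}$ need not be independent, so Painter cannot color all of it; you must select an independent subset $X \subseteq S$, and then every vertex of $S \setminus X$ loses a token, which must be offset by a legal decrement of its $f$-value caused by the deletion of some neighbor in $X$. Arranging this simultaneously with the save option is where the work lies, and you have not done it. Second, and more seriously, executing the $\delsave$ steps out of their committed order (only those indices whose vertices Lister happens to mark in a given round) changes the state in which each comparison $f(u_i) > f(w_i)$ is evaluated, so a step that performed a save in the offline sequence may fail to do so online, and vice versa. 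This is precisely the non-monotonicity phenomenon that the present paper isolates as the fatal flaw in \cite[Theorem 1.4]{WD} (the ``false part'' of the ``Lemma'' in Section~\ref{sec:intro}), so it cannot be waved away as bookkeeping: one must either prove that the reordered execution remains legal and still protects each $w_i$ at the moment it matters, or design the Painter strategy so that the order of comparisons is preserved. Until that is done, the bounds on $\chi_P$ and $\chi_{DPP}$ are not established by your argument.
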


    Since these parameters will not be directly used in the sequel, we will not define them here and only give a brief overview with a few pointers to the relevant literature. \emph{List-coloring} (or \emph{choosability}) is a generalization of graph coloring introduced independently by Vizing \cite{Viz} and Erd\H{o}s, Rubin, and Taylor \cite{ERT}, which has by now become a classical part of graph coloring theory \cites[\S14.5]{BondyMurty}[\S5.4]{Diestel}. In list-coloring, the sets of available colors may vary from vertex to vertex, and the objective is to assign a color to each vertex from its list so that adjacent vertices receive different colors. \emph{DP-coloring} \ep{also known as \emph{correspondence coloring}} is a further generalization invented by Dvo\v{r}\'ak and Postle \cite{DP}. In the DP-coloring framework, not only the lists of available colors but also the identifications between them are allowed to vary from edge to edge. This notion is closely related to \emph{local conflict coloring} introduced by Fraigniaud, Heinrich, and Kosowski \cite{FHK} with a view toward applications in distributed computing. Even though DP-coloring has only emerged relatively recently, it has already attracted considerable attention.\footnote{According to MathSciNet, the original paper \cite{DP} by Dvo\v{r}\'ak and Postle has over 100 citations at the time of writing.} The \emph{paint number} of a graph generalizes list-coloring in a different way. It is an ``online'' variant of list-coloring wherein the lists of available colors are revealed in stages by an adversary, which was 
    independently developed by Schauz \cite{Schauz} and Zhu \cite{Zhu}. 
    Finally, the \emph{DP-paint number} is a common upper bound on the DP-chromatic number and the paint number, introduced and studied by Kim, Kostochka, Li, and Zhu \cite{Paint}.

    It turns out that, in contrast to $\de(G)$, the weak degeneracy of a graph enjoys various nontrivial upper bounds that yield corresponding results for the coloring parameters listed in Theorem~\ref{thm:wd+1}. For example, as mentioned above, all $d$-regular graphs have degeneracy exactly $d$. On the other hand, we have a version of Brooks' theorem for weak degeneracy: All connected $d$-regular graphs other than cycles and cliques are weakly $(d-1)$-degenerate \cite[Theorem 1.5]{WD}. (Note that both odd and even cycles have weak degeneracy $2$, which is a consequence of the fact that their DP-chromatic number is $3$ \cite{DP}.) Furthermore, for $d \geq 3$, a graph that is not weakly $(d-1)$-degenerate must contain either a $(d+1)$-clique or a somewhat ``dense'' subgraph: 

    \begin{thm}[{AB--EL \cite[Theorem 1.7]{WD}}]
        If $G$ is a nonempty graph with $\wde(G) \geq d \geq 3$, then either $G$ contains a $(d+1)$-clique, or it has a nonempty subgraph $H$ with average degree at least
        \[
            d + \frac{d - 2}{d^2 + 2d - 2} \,>\, d.
        \]
    \end{thm}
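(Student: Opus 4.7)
I would prove the contrapositive: assuming $G$ is $K_{d+1}$-free and every nonempty subgraph of $G$ has average degree strictly less than $\alpha \defeq d + \frac{d-2}{d^2+2d-2}$, I would show that $\wde(G) \leq d-1$, i.e.,\ that $G$ is weakly $(d-1)$-degenerate. This would build on the ``weak Brooks'' theorem from \cite{WD} mentioned in the introduction, which handles the purely $d$-regular case; the present statement is the non-regular generalization that allows a small amount of slack in the average degree.

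The proof would proceed by induction on $|V(G)|$, but because the $\delsave$ operation can yield pairs $(G',f')$ with non-constant $f'$, I would strengthen the inductive statement to cover all pairs $(G,f)$ in which $f$ is suitably bounded below vertex-wise and $G$ still satisfies the clique-freeness and average-degree hypotheses. Given a minimum counterexample $(G,f)$, the goal is to exhibit a \emph{reducible configuration}: a vertex $u$ or an adjacent pair $(u,w)$ such that a legal application of $\del(u)$ or $\delsave(u,w)$ yields a smaller pair still satisfying the hypotheses, contradicting minimality. The easy reductions handle vertices whose degree is small relative to their $f$-value; the subtler ones use $\delsave$, whose legality requires the strict inequality $f(u) > f(w)$ and is therefore sensitive to how the $f$-values have already been decremented along the way.

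Ruling out all such reducible configurations would force a rigid degree-and-adjacency pattern on $G$---typically, strong restrictions on which low-degree vertices can be adjacent to which medium- or high-degree vertices. A \emph{discharging} argument would then convert this pattern into a per-vertex lower bound on the degree that, summed over a suitably chosen nonempty subgraph, yields average degree at least $\alpha$, contradicting the hypothesis. The $K_{d+1}$-freeness assumption enters precisely when a Brooks-style $\delsave$ is executed inside a densely connected region, since a $(d+1)$-clique is the canonical obstruction to such a move.

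The main obstacle will be twofold: first, correctly identifying the complete list of reducible configurations, especially those involving $\delsave$, whose eligibility depends delicately on the current $f$-values and on the local neighborhood structure of the pair $(u,w)$; and second, tuning the discharging rules so that the contributions sum to exactly $\alpha$. The arithmetic value $\frac{d-2}{d^2+2d-2}$ is not an obvious combinatorial ratio, so it presumably encodes the worst-case exchange rate at some extremal near-regular $K_{d+1}$-free configuration, and the proof amounts to showing that this configuration is the unique tight obstruction up to the discharging.
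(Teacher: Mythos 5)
First, a point of reference: this statement is not proved in the present paper at all. It is quoted verbatim from the earlier paper \cite{WD} (there it is Theorem 1.7) purely as background, so there is no in-paper proof to compare your attempt against; the comparison has to be against the argument in \cite{WD}.

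That said, your proposal has a genuine gap, and the gap is essentially the entire proof. You correctly set up the contrapositive (assuming $G$ is $K_{d+1}$-free and every nonempty subgraph has average degree below $d + \frac{d-2}{d^2+2d-2}$, show $\wde(G) \leq d-1$), and the general template you describe --- take a vertex-minimal obstruction, rule out reducible configurations via $\del$ and $\delsave$, then count --- is indeed the right genre of argument and is broadly how \cite{WD} proceeds (a minimal subgraph that is not weakly $(d-1)$-degenerate has minimum degree at least $d$, its degree-$d$ vertices are constrained in how they may attach to one another and to higher-degree vertices because otherwise a legal $\delsave$ move would exist, and an edge count between the low- and high-degree classes produces the stated average-degree bound). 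But none of the load-bearing content appears in your write-up: you exhibit no reducible configuration, state no structural lemma about the minimal counterexample, give no discharging rules, and never derive the constant $\frac{d-2}{d^2+2d-2}$ --- you explicitly defer all of these as ``obstacles'' and speculate that the constant ``presumably encodes'' some extremal exchange rate. A proof plan that names the standard technique and then lists the hard steps as open is not a proof; as it stands there is no way to check that the configurations you would need are actually reducible (legality of $\delsave$ is delicate, as the paper's own discussion of the false monotonicity ``Lemma'' shows), nor that any discharging scheme closes at exactly this constant.
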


    In \cite{Regular}, Yang showed that for every $d$, there is a $d$-regular graph $G$ with $\wde(G) = \lfloor d/2 \rfloor + 1$. (This spectacularly disproved a pessimistic conjecture of the first two authors \cite[Conjecture 1.10]{WD}.) It is also known that $d$-regular graphs $G$ of girth at least $5$ satisfy $\wde(G) \leq d - \Omega(\sqrt{d})$ \cite[Theorem 1.12]{WD}, and we conjecture that the same asymptotic bound holds for triangle-free graphs.

    The results cited above show that weak degeneracy is more powerful than ordinary degeneracy when one is working with regular graphs. Planar graphs form another class of great interest in graph coloring theory \cites[\S10]{BondyMurty}[\S4]{Diestel}. While planar graphs are $4$-colorable by a famous theorem of Appel and Haken \cite{appel1989every,4CTNewProof,4CTFormal}, the optimal upper bound on the parameters $\chi_\ell(G)$, $\chi_{DP}(G)$, $\chi_P(G)$, and $\chi_{DPP}(G)$ for planar $G$ is $5$, established by Thomassen \cite{Thom}, Dvo\v{r}\'ak and Postle \cite{DP}, Schauz \cite{Schauz}, and Kim, Kostochka, Li, and Zhu \cite{Paint} respectively; the optimality was shown by Voigt \cite{Voigt}. This is another instance where the ``degeneracy plus one'' bound is not sharp: the best general upper bound on $\de(G)$ for planar $G$ is $5$, which results in the bound of $6$ for $\chi_\ell(G)$, $\chi_{DP}(G)$, etc. In contrast, the main result of this paper is a proof that ``weak degeneracy plus one'' does give the optimal bound:

    \begin{thm}\label{thm:main}
        Planar graphs are weakly $4$-degenerate.
    \end{thm}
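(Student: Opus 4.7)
The plan is to follow the structure of Thomassen's celebrated inductive proof that planar graphs are $5$-list-colorable \cite{Thom}, replacing color assignments with legal $\delsave$ operations. Since weak degeneracy is monotone under subgraph inclusion, it suffices to treat $2$-connected planar triangulations, and by choosing an appropriate outer face we may focus on near-triangulations with a distinguished boundary edge. Accordingly, we introduce a strengthened lemma, to be proved by induction, which applies to any $2$-connected plane graph $G$ whose bounded faces are all triangles, with outer cycle $C$ and a distinguished edge $xy \in E(C)$, together with a function $f\colon V(G) \to \N$ satisfying
\begin{enumerate}[label=\textup{(\arabic*)}]
\item $f(v) \geq 4$ for every interior vertex $v \in V(G) \setminus V(C)$;
\item $f(v) \geq 2$ for every outer vertex $v \in V(C)\setminus\{x,y\}$;
\item small, precisely tuned values of $f(x)$ and $f(y)$ mimicking the two precolored vertices of Thomassen's proof.
\end{enumerate}
The strengthened statement asserts that $(G,f)$ can be reduced to the empty graph by a sequence of legal $\delsave$ operations. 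Theorem~\ref{thm:main} then follows by taking $G$ to be a triangulation of the given planar graph, picking any triangular face to play the role of the outer face, and observing that the constant function $f\equiv 4$ satisfies the hypotheses after an appropriate assignment of the roles of $x$ and $y$.

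We prove the strengthened statement by strong induction on $|V(G)|$. The base case is a triangle $xyz$, reduced by an explicit short sequence such as $\delsave(z,x)$, $\delsave(y,x)$, $\del(x)$, whose legality pins down the exact values we must allow for $f(x)$ and $f(y)$. In the inductive step we split into cases depending on whether $C$ has a chord. If $C$ has a chord $uv$, we split $G$ along $uv$ into near-triangulations $G_1 \ni xy$ and $G_2 \ni uv$, apply induction to $G_1$ to reduce it down to $\{u,v\}$, verify that the resulting $f$-values at $u$ and $v$ still satisfy the hypothesis with $uv$ now playing the role of the distinguished edge, and then apply induction to $G_2$. If $C$ has no chord, we peel off the ``fan'' at the neighbor $u_2$ of $x$ on $C$ by performing a carefully chosen sequence of $\delsave$ operations that removes $u_2$ (and possibly some of its nearby neighbors) and yields a smaller near-triangulation $G'$ to which the induction applies.

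The hard part is the no-chord case. In Thomassen's proof, removing $u_2$ involves reserving two colors $a,b \in L(u_2)\setminus L(x)$, which effectively costs $2$ list-units at each interior neighbor $w_i$ of $u_2$ while costing $0$ at the boundary vertices $x$ and $u_3$. Since a single $\delsave$ operation saves only one vertex, this ``two-way save'' is not directly available; instead, we must use the slack afforded by the interior neighbors $w_i$, whose $f$-values of at least $4$ drop to $3$ after $\del(u_2)$ (one more than the $2$ required once they become boundary vertices), to pay for auxiliary $\delsave$ operations that protect the value of $f$ at $u_3$. Formalising this balancing act, together with the delicate situation of common neighbors of $x$ and $y$ where no single $\delsave$ operation can save both endpoints at once, is the source of the subtlety in the argument and is presumably the point at which the mistake in \cite{WD} arose and now requires rectification.
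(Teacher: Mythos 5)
Your proposal is, in essence, the approach that the paper identifies as fatally flawed: a direct transcription of Thomassen's induction (two distinguished adjacent boundary vertices, boundary values $\geq 2$, interior values $\geq 4$, chord case plus fan-removal case) into the $\delsave$ framework. You correctly locate the crux --- Thomassen's step of reserving two colors $a,b$ for the fan vertex has no one-shot analog, since a single $\delsave$ protects one vertex against one decrement --- but you do not resolve it; ``formalising this balancing act'' is precisely the step that cannot be carried out with your inductive statement. Concretely: in the chordless case, if you delete the fan apex first via $\del(u_2)$, then the next boundary vertex $u_3$ drops from $2$ to $1$, while the inductive hypothesis for the smaller near-triangulation $G'$ requires value $\geq 2$ at every boundary vertex. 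No sequence of auxiliary $\delsave$ operations can repair this, because saves only \emph{prevent future decrements}; they never increase a value, and only one fan vertex is even adjacent to $u_3$. If instead you delete the fan apex last (mirroring Thomassen more closely), it needs roughly $\deg - 2$ of its neighbors to each perform a successful save toward it, which requires controlling those neighbors' values at their (uncontrolled, mid-induction) deletion times --- this is exactly the false monotonicity statement from the earlier paper, for which the introduction gives an explicit $K_2$ counterexample.

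The actual proof abandons the two-precolored-vertices formulation entirely. Its inductive statement (adapted from Dvo\v{r}\'ak--Lidick\'y--Mohar) deletes a set $S$ of up to \emph{three} consecutive boundary vertices outright and, crucially, carries an independent set $I$ of boundary vertices whose target values are lowered from $3$ to $2$; this set $I$ is the mechanism that absorbs the deficits your sketch cannot handle. Moreover, the argument is not a clean two-case induction but a minimal-counterexample analysis: after ruling out $0$- and $1$-chords and classifying $2$-chords, it pins down the exact local structure around two consecutive boundary vertices $v_2,v_3$ (degrees $3$ and $4$, with specific interior neighbors $x$ and $y$) and only then performs a short explicit $\delsave$ sequence. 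Your proposal would need to be rebuilt around a strengthened inductive hypothesis of this kind; as written, the inductive step in the chordless case fails.
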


    Theorem~\ref{thm:main} was stated by the first two authors in \cite[Theorem 1.4]{WD}. Unfortunately, as pointed out to us by Tao Wang \ep{personal communication}, the argument given in \cite{WD} is flawed, and we believe the flaw is fatal. Since the mistake is somewhat subtle (at least in our opinion) and there is a danger it would reappear in other papers on weak degeneracy, we feel it necessary to briefly explain what it is. The approach followed in \cite{WD} was to adapt Thomassen's famous proof that planar graphs are $5$-list-colorable \cite{Thom}. Thomassen's proof is inductive, and to facilitate the induction in one of the cases, it relies on removing a particular pair of colors from the lists of several vertices---an operation that has no analog in the weak degeneracy framework. The authors of \cite{WD} tried to get around this issue by using a certain monotonicity property of weak degeneracy, but in fact that property does not hold. To be more precise, here is the general statement they relied on:

    \begin{false}
        Let $G$ be a graph and let $f$, $f' \colon V(G) \to \N$ be functions such that $f(v) \leq f'(v)$ for all $v \in V(G)$. Suppose that starting with $(G,f)$, all vertices can be removed from $G$ by some sequence of legal applications of the $\delsave$ operation.

        
         {\upshape\textbf{True part:}} The same sequence of operations is legal when used starting with $(G,f')$.

        
         {\upshape\textbf{False part:}} Furthermore, if we run this sequence of operations starting with $(G,f')$, then at the time a vertex $w \in V(G)$ is deleted, the value of the function at $w$ is at least $f'(w) - f(w)$. 
    \end{false}

    While the false part of the above ``Lemma'' may seem plausible at first glance, here is a simple counterexample. Suppose $G \cong K_2$ is a single edge $uw$, and let $f \defeq (u \mapsto 1, w \mapsto 0)$, $f' \defeq (u \mapsto 1, w \mapsto 1)$. Consider the sequence of operations $\delsave(u,w)$, $\del(w)$. The operation $\delsave(G,f,u,w)$ removes $u$ from $G$ and, since $f(u) > f(w)$, it does not alter the value of the function at $w$. On the other hand, we have $f'(u) = f'(w)$, so the operation $\delsave(G, f',u,w)$ brings the value at $w$ to $0$. Thus, when $w$ is deleted, the value at $w$ is $0$ in both cases, even though $f'(w) > f(w)$. This failure of monotonicity makes it difficult to implement in the weak degeneracy setting the idea of ``reserving'' colors for future use, which is common in graph coloring arguments.
    
    Nevertheless, we were able to find a different approach. It is still inductive and greatly inspired by Thomassen's argument. Even more precisely, the form of the inductive statement is adapted from \cite[Theorem 6]{DLM} by Dvo\v{r}\'ak, Lidick\'y, and Mohar. That being said, we should emphasize that our proof is not directly analogous to the argument in \cite{DLM}, which requires a coordinated choice of colors for certain vertices (see \cite[p.~59, (X4b)]{DLM}), a step that in general has no counterpart in the weak degeneracy setting. Nevertheless, after having established local structure of a purported minimal counterexample, a careful choice of deletion operations achieves the same effect. The bulk of our proof comprises a series of local reducible configurations that we show may not appear in such a counterexample.
    
    Although weak degeneracy has only been introduced relatively recently, it has already attracted considerable attention from researchers in graph coloring theory. In particular, it has been applied to studying planar graphs with restricted structure \cite{Planar1,Planar2,Planar3,Embedded2,Embedded3}, graphs embeddable in surfaces other than the plane \cite{Embedded1,Embedded2,Embedded3}, and other graph coloring variants \cite{Extra1,Extra2,Extra3}. We feel 
    that our proof of Theorem~\ref{thm:main} sheds new light on the nature of planar graph coloring and expect some of its ideas to find further applications, especially since the tools we employ are of necessity very flexible. 
    
    \section{The inductive statement}


    We begin with a few preliminary remarks. Given a graph $G$, a set of vertices $S$, and a vertex $v \in V(G)$, we let $N_S(v)$ be the set of all neighbors of $v$ in $S$ and write $\deg_S(v) \defeq |N_S(v)|$. When $G$ is a graph and $f \colon V(G) \to \N$ is a function, we say that $G$ is \emphd{\ep{strongly} $f$-degenerate} if all its vertices can be removed via a sequence of legal applications of the $\del$ operation.
    
    We shall often employ the following slight abuse of terminology. If $G$ is a graph, $f$ is an integer-valued function with $\dom(f) \supseteq V(G)$, and $f'$ is the restriction of $f$ to $V(G)$, we use the phrase ``$G$ is weakly $f$-degenerate'' to mean ``$G$ is weakly $f'$-degenerate,'' and write $\delsave(G,f,u,w)$ and $\del(G,f,u)$ for $\delsave(G, f', u, w)$ and $\del(G,f',u)$ respectively. 

    The following fact will be used repeatedly (this is the true part of the ``Lemma'' from the introduction):

    \begin{lemma}[{\cite[Lemma 2.1]{WD}}]\label{lemma:monotone}
        Let $G$ be a graph and let $f$, $f' \colon V(G) \to \N$ be functions such that $f(v) \leq f'(v)$ for all $v \in V(G)$. If $G$ is weakly $f$-degenerate, then $G$ is weakly $f'$-degenerate as well. 
    \end{lemma}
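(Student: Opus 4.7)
The plan is to reduce the statement to the following single-operation claim, which I would then chain across the sequence of $\delsave$ operations witnessing weak $f$-degeneracy: if $f \leq f'$ pointwise on $V(G)$ and $\delsave(G, f, u, w) = (G - u, f_1)$ is legal, then $\delsave(G, f', u, w) = (G - u, f'_1)$ is also legal and $f_1 \leq f'_1$ pointwise on $V(G-u)$. Given this claim, an induction on the length of the sequence (equivalently, on $|V(G)|$) closes the argument: if the current pair $(G_i, f_i), (G_i, f'_i)$ satisfies $f_i \leq f'_i$ and the next operation $\delsave(u_i, w_i)$ is legal for $f_i$, the claim guarantees it is legal for $f'_i$ and produces $f_{i+1} \leq f'_{i+1}$, so the invariant persists until the graph is empty.

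To establish the single-operation claim, I would verify $f_1(v) \leq f'_1(v)$ pointwise on $V(G - u)$. For $v \notin N_G(u)$ both values agree with the inputs, and for $v \in N_G(u) \setminus \set{w}$ both values decrement by exactly one, so the inequality is preserved in either case. The only nontrivial case is $v = w \neq \blank$, where the value at $w$ is preserved precisely when the strict inequality $f(u) > f(w)$ (respectively $f'(u) > f'(w)$) holds --- and these two strict inequalities need \emph{not} be equivalent under $f \leq f'$.

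The main obstacle, and really the only place where the argument could conceivably fail, is the subcase when $f(u) > f(w)$ but $f'(u) \leq f'(w)$, i.e., when the ``save'' happens for the smaller function but not for the larger one. Here $f_1(w) = f(w)$ while $f'_1(w) = f'(w) - 1$, so I need $f(w) \leq f'(w) - 1$. This follows from the chain
\[
f(w) \,\leq\, f(u) - 1 \,\leq\, f'(u) - 1 \,\leq\, f'(w) - 1,
\]
where the first inequality is the hypothesis $f(u) > f(w)$, the second is $f \leq f'$ at $u$, and the third is the subcase assumption $f'(u) \leq f'(w)$. In the remaining three subcases the inequality $f_1(w) \leq f'_1(w)$ is immediate. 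Legality for $f'$ is then automatic since $f'_1 \geq f_1 \geq 0$. Notice that this argument stops exactly one step short of the (false) ``excess is preserved'' claim from the introduction: the counterexample there shows that $f_1(w) = f'_1(w)$ can occur in this subcase even when $f(w) < f'(w)$, which is precisely why the stronger monotonicity statement fails.
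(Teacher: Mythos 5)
Your proof is correct: the single-step claim with the four-way case analysis at $v=w$, and in particular the chain $f(w)\leq f(u)-1\leq f'(u)-1\leq f'(w)-1$ in the one nontrivial subcase, is exactly the argument needed, and it is essentially the same proof as the one given for Lemma 2.1 in the cited paper \cite{WD} (the present paper only cites that result without reproving it). Your closing remark correctly identifies why the pointwise inequality $f_1\leq f_1'$ survives while the stronger ``excess is preserved'' statement from the introduction does not.
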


    We derive Theorem~\ref{thm:main} from a stronger technical statement \ep{as is common in arguments related to Thomassen's theorem, the stronger statement facilitates the induction}. As mentioned in the introduction, this particular statement is inspired by \cite[Theorem 6]{DLM}. 
    For a plane graph $G$, its \emphd{outer face boundary} is the (not necessarily induced) subgraph $\bdry{G} \subseteq G$ whose vertices and edges are exactly the ones incident to the outer face of $G$. We say that vertices in a set $S \subseteq V(\bdry{G})$ are \emphd{consecutive} if $S = \0$ or the induced subgraph $(\bdry{G})[S]$ is connected.
    
\begin{thm}\label{thm:inductive}
    Let $G$ be a plane graph. 
    Let $S \subseteq V(\bdry{G})$ be a set of at most three consecutive vertices 
    and let $I \subseteq V(\bdry{G}) \setminus S$ be a set that is independent in $G$. Let $f \colon V(G - S) \to \Z$ be the function given by 
     \begin{equation}\label{eq:f}
        f(v) \,\defeq\, \begin{cases}
            4 - \deg_S(v) &\text{if } v \in V(G) \setminus V(\bdry{G}),\\
            3 - \deg_S(v) &\text{if } v \in V(\bdry{G}) \setminus (S \cup I),\\
            2 - \deg_S(v) &\text{if } v \in I.
        \end{cases}
    \end{equation}
    Then the graph $G - S$ is weakly $f$-degenerate unless there exists a vertex $v \in I$ with $3$ neighbors in $S$. 
\end{thm}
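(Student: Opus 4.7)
The plan is to proceed by strong induction on $|V(G)|$. The base case $V(G) = S$ is trivial, so I assume that $(G, S, I, f)$ is a minimal counterexample: no $v \in I$ has three neighbors in $S$, yet $G - S$ is not weakly $f$-degenerate. The strategy is to exhibit, in any such minimal counterexample, one of a finite list of local reducible configurations near the outer face of $G$, and for each such configuration to describe a short sequence of legal $\delsave$ operations that peels off one or two vertices and leaves behind a strictly smaller instance $(G', S', I')$ satisfying the hypotheses of Theorem~\ref{thm:inductive}. Applying induction to $(G', S', I')$ and prepending these operations produces a legal deletion sequence for $(G, S, I)$, contradicting minimality.

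First I would carry out the standard structural clean-up: (i)~$G$ is connected; (ii)~by triangulating the interior (which leaves $f$, $S$, and $I$ untouched, and by Lemma~\ref{lemma:monotone} can only make weak $f$-degeneracy harder), we may assume every internal face is a triangle; (iii)~the outer cycle $\bdry{G}$ is chordless, since a chord decomposes $G$ into two smaller plane subgraphs sharing the chord, to each of which induction applies after redistributing $S$ and $I$; and (iv)~every interior vertex has degree at least $5$ in $G$, else a single $\del$ removes it. Analogous degree lower bounds on boundary vertices outside $S \cup I$, and on $I$-vertices with small $\deg_S$, follow from one- or two-vertex $\delsave$ reductions.

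The bulk of the proof is then a local case analysis in the spirit of Thomassen's boundary-peeling, enhanced as in Dvo\v{r}\'ak--Lidick\'y--Mohar \cite{DLM}. I would parametrize the outer cycle so that $S = \{s_1, \ldots, s_k\}$ with $0 \leq k \leq 3$ appears consecutively, and let $x$ denote the boundary neighbor of $s_k$ outside $S$ (or any fixed boundary vertex when $S = \0$). The interior neighbors of $x$ together with $s_k$ and the next outer vertex $y$ form a fan, and since the interior is triangulated, the combinatorics of this fan, together with which of $x$, $y$, and the fan vertices lie in $I$ and the value of $\deg_S(x)$, yield only finitely many local pictures. For each picture I would specify a short sequence of $\delsave$ operations (typically $\delsave(x, s_k)$ or $\delsave(x, y)$, possibly followed by a second operation on a fan vertex) together with a choice of new triple $(G', S', I')$ obtained by deleting the peeled vertices and re-designating part of the new outer boundary; then I would verify that the operations are legal and that $(G', S', I')$ satisfies the hypotheses, namely that $S'$ comprises at most three consecutive vertices of $\bdry{G'}$, that $I'$ is independent in $G'$, and that no vertex of $I'$ has three neighbors in $S'$.

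The principal obstacle, and the reason the argument is considerably more delicate than Thomassen's, is the failure of monotonicity highlighted in the ``Lemma'' counterexample: one cannot ``reserve'' budget at a boundary vertex for future use. Consequently, whenever a reduction demotes a vertex $v$ into $I'$, its $f$-value must land on \emph{exactly} $2 - \deg_{S'}(v)$, so every side effect of the preceding $\delsave$ steps on $v$ must be controlled precisely rather than merely upper-bounded. The hardest subcases are those in which a fan vertex $x_i$ already lies in $I$ and the natural reduction threatens to push $f(x_i)$ below zero; to survive, the argument must either reroute through a $\delsave$ that explicitly saves the color at $x_i$, or re-designate $x_i$ as an ordinary boundary vertex of $G'$, which is possible precisely when $S'$ gains an extra neighbor of $x_i$ that bumps its allowed $f$-value up by one. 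Enumerating the resulting configurations and verifying the exact bookkeeping, all under the persistent constraint that no $I'$-vertex ever acquires three neighbors in $S'$, accounts for the bulk of the work.
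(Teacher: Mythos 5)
Your high-level plan---minimal counterexample, structural clean-up (connectivity, triangulation, chordlessness of the outer cycle), then a finite list of reducible configurations near the outer boundary with exact bookkeeping forced by the failure of monotonicity---does match the architecture of the paper's proof. But the proposal stops exactly where the real work begins: the entire content of the argument is the identification of the reducible configurations and the verification that the corresponding $\delsave$ sequences are legal and leave a valid smaller instance, and none of that is carried out. You correctly diagnose the obstacle (a vertex demoted into $I'$ must land on exactly $2-\deg_{S'}$, so every side effect must be controlled exactly), but you give no evidence that your generic fan analysis around the boundary neighbor of $s_k$ closes. In the paper the configurations that work are quite specific and lean heavily on the independent set $I$: one first shows that $I$ is a \emph{maximal} independent set on $\bdry{G}-S$, that $|S|=3$, that the boundary vertices following $S$ satisfy $v_1\notin I$, $v_2\in I$, $v_3\notin I$, $v_4\in I$, and that there are \emph{interior} vertices $x$ (adjacent to $v_1,v_2,v_3$ and to no other boundary vertex) and $y$ (adjacent to $v_3,v_4,x$); the decisive move is $\delsave(x,v_2)$---deleting an interior vertex while saving for an $I$-vertex---followed by induction on $G-v_2-v_3-x$ and cleaning up $v_2,v_3$ at the very end. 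This is not the Thomassen-style ``peel the boundary vertex next to $S$'' move your sketch gestures at; that move is precisely what fails (it is the error in the earlier paper). A key intermediate tool absent from your plan is the classification of $2$-chords (each must pass through the middle vertex of $S$ or cut off a single four-vertex configuration), which is what makes all the later independence and adjacency checks succeed.

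There are also two concrete slips. First, $\delsave(x,s_k)$ is not an available operation: the game is played on $G-S$, so vertices of $S$ are never present and cannot be the target of a save. Second, ``triangulating the interior leaves $f$, $S$, and $I$ untouched'' is not automatic: an added edge joining two boundary vertices of an internal face can destroy the independence of $I$ or create an $I$-vertex with three neighbors in $S$; one must add the edge so that at least one endpoint is an interior vertex and argue that such a choice exists, as the paper does. Likewise, your chord-elimination step needs the chord chosen extremally so that the resulting size-$3$ set $S'$ on the smaller side has no common neighbor there; without that, the inductive hypothesis need not apply.
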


    Note that in the setting of Theorem~\ref{thm:inductive}, the condition that no vertex in $I$ has $3$ neighbors in $S$ is equivalent to saying that the function $f$ is nonnegative. 

    \begin{proof}[Proof of Theorem~\ref{thm:main} from Theorem~\ref{thm:inductive}]
        Let $G$ be a plane graph. 
        Applying Theorem~\ref{thm:inductive} with $S = I = \0$, we see that $G$ is weakly $f$-degenerate, where $f(v) = 4$ for $v \in V(G) \setminus V(\bdry{G})$ and $f(v) = 3$ for $v \in V(\bdry{G})$. This implies that $G$ is weakly $4$-degenerate by Lemma~\ref{lemma:monotone}.
    \end{proof}

    \section{Proof of Theorem~\ref{thm:inductive}}



    \subsection{A counterexample and its basic properties}

Suppose Theorem~\ref{thm:inductive} fails and let $(G,S,I,f)$ be a counterexample. 
Explicitly, this means that:
\begin{itemize}
    \item $G$ is a plane graph, 
    \item $S \subseteq V(\bdry{G})$ is a set of at most $3$ consecutive vertices,
    \item $I \subseteq V(\bdry{G}) \setminus S$ is a set that is independent in $G$,
    \item $f \colon V(G - S) \to \Z$ is defined by \eqref{eq:f}, 
    \item 
    no vertex in $I$ has $3$ neighbors in $S$,
    \item yet, the graph $G - S$ is not weakly $f$-degenerate.
\end{itemize}
We choose such a counterexample to minimize $|V(G)|$, then maximize $|E(G)|$, then maximize $|S|$, and then finally maximize $|I|$. 
The remainder of the argument comprises a series of claims describing the structure of our counterexample that finally culminates in a contradiction.


\begin{claim}
    $G$ is connected.
\end{claim}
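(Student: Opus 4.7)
I would argue by contradiction: suppose $G$ is disconnected, and show that $G - S$ is in fact weakly $f$-degenerate, contradicting the counterexample assumption. Write $G = G_1 \sqcup G_2$ as a disjoint union of two nonempty plane subgraphs. Since $S$ is consecutive, $(\bdry{G})[S]$ is connected and so $S$ lies entirely within one component; without loss of generality $S \subseteq V(G_1)$, and set $I_j \defeq I \cap V(G_j)$ for $j = 1, 2$.

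The main step is to apply the minimality of $(G, S, I, f)$ separately to each component. For $j = 1, 2$, let $f_j \colon V(G_j - (S \cap V(G_j))) \to \Z$ be the function defined by \eqref{eq:f} relative to the plane graph $G_j$. The hypotheses of Theorem~\ref{thm:inductive} transfer cleanly to $(G_j, S \cap V(G_j), I_j, f_j)$: every vertex of $V(\bdry{G}) \cap V(G_j)$ belongs to $V(\bdry{G_j})$ (viewed standalone), and $(\bdry{G})[S] \subseteq (\bdry{G_1})[S]$ ensures that $S$ remains consecutive in $\bdry{G_1}$; the independence of $I_j$ in $G_j$ and the absence of a vertex in $I_j$ with three neighbors in $S$ are inherited from $G$. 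Since $|V(G_j)| < |V(G)|$, the minimality of our counterexample implies that each $G_j - (S \cap V(G_j))$ is weakly $f_j$-degenerate.

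Now, because there are no edges between $G_1$ and $G_2$ and $S \subseteq V(G_1)$, the value $\deg_S(v)$ is the same whether computed in $G$ or in $G_j$. The only possible discrepancy between $f_j(v)$ and the restriction of $f$ to $V(G_j - S)$ arises when $v$ lies on $\bdry{G_j}$ but not on $\bdry{G}$: in that case $f_j(v) = 3 - \deg_S(v)$ while $f(v) = 4 - \deg_S(v)$. Hence $f \geq f_j$ pointwise, and Lemma~\ref{lemma:monotone} upgrades each $G_j - (S \cap V(G_j))$ to being weakly $f$-degenerate. Concatenating a legal removal sequence on $(G_1 - S, f)$ with one on $(G_2, f)$ then yields a legal removal sequence on $(G - S, f)$, delivering the desired contradiction. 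The only point that requires care is the inequality $f \geq f_j$: interior vertices of $G$ that happen to lie on the outer face of a standalone component receive strictly larger values under $f$, which is precisely the direction required for monotonicity.
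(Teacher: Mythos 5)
Your proof is correct and takes essentially the same route as the paper, which disposes of this claim in one sentence by applying the inductive hypothesis to each component and concatenating the removal sequences. Your version additionally spells out the one genuinely non-trivial point --- that a vertex interior to $G$ may lie on the outer face boundary of a standalone component, so $f_j \leq f$ and Lemma~\ref{lemma:monotone} is needed --- which the paper leaves implicit.
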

\begin{proof}
    Otherwise Theorem \ref{thm:inductive} would apply to each component of $G$, and if $H-S$ is weakly $f$-degenerate for each component $H$ of $G$, then so is $G$ itself.
\end{proof}

\begin{claim}\label{claim:S2}
    $|S| \geq 2$.
\end{claim}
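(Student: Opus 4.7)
The plan is to leverage the extremal choice of the counterexample, specifically the maximality of $|S|$. I will show that if $|S| \leq 1$, then one can pass to a new tuple $(G, S', I', f')$ with $|S'| = |S| + 1$ that is still a counterexample, contradicting our choice. Note throughout that $|V(G)| \geq 2$, for otherwise $G - S$ is empty and hence trivially weakly $f$-degenerate; since $G$ is connected, this forces $V(\bdry{G})$ to contain at least two vertices and $\bdry{G}$ itself to be a connected subgraph \ep{as the boundary walk of the outer face is connected}.

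If $|S| = 0$, I pick any $v \in V(\bdry{G})$ and set $S' \defeq \{v\}$, $I' \defeq I \setminus \{v\}$. If $|S| = 1$, I write $S = \{v\}$; connectivity of $\bdry{G}$ guarantees that $v$ has a neighbor $u$ in $\bdry{G}$, and I set $S' \defeq \{v, u\}$, $I' \defeq I \setminus \{u\}$. In both cases $S'$ consists of at most three consecutive vertices, $I'$ is independent in $G$, and no vertex of $I'$ has three neighbors in $S'$ \ep{automatic as $|S'| \leq 2$}, so the function $f'$ associated to $(G, S', I')$ through \eqref{eq:f} is non-negative and $(G, S', I', f')$ is a valid instance of Theorem~\ref{thm:inductive}.

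The crux is to verify that $G - S'$ is still not weakly $f'$-degenerate. Let $u$ denote the vertex added in going from $S$ to $S'$ \ep{so $u = v$ in the case $|S| = 0$}. The operation $\del(G - S, f, u)$ is legal because $f(u) \geq 2 - \deg_S(u) \geq 1$. A direct check across the three regimes of \eqref{eq:f} — interior vertex, boundary non-$I$ vertex, and $I$-vertex — shows that for every $w \in V(G - S')$, the residual function after $\del(G - S, f, u)$ takes the value $f(w) - [w \in N_G(u)] = f'(w)$; the only nontrivial case is when both $u$ and $w$ belong to $I$, which is handled by independence of $I$ since then $w \notin N_G(u)$. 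Consequently, any witness that $G - S'$ is weakly $f'$-degenerate could be prepended with $\del(u)$ to produce a witness that $G - S$ is weakly $f$-degenerate, contradicting the counterexample hypothesis.

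Since $(G, S', I', f')$ has the same $|V(G)|$ and $|E(G)|$ but strictly larger $|S'|$, this contradicts the maximality of $|S|$ in our choice of counterexample, and hence $|S| \geq 2$. I do not anticipate a serious obstacle: the argument is a short structural manipulation combined with a bookkeeping identity comparing $\deg_{S'} = \deg_S + \mathbf{1}_{N_G(u)}$ against the drop in $f$ caused by $\del(u)$. The only delicate corner is the sub-case $u \in I$ with $|S| = 1$, where $f(u) = 1$ is only barely non-negative, but that still suffices for legality.
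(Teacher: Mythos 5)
Your argument is correct and is essentially the paper's own proof: both pick a boundary vertex $u$ (adjacent to $S$ when $S \neq \0$), pass to $S' = S \cup \set{u}$ and $I' = I \setminus \set{u}$ using the maximality of $|S|$ in the extremal choice, and observe that $\del(G-S,f,u)$ produces exactly the function attached to $(G,S',I')$ by \eqref{eq:f}. The only cosmetic slip is your justification of legality via $f(u) \geq 1$ — legality concerns non-negativity of the resulting function on the remaining vertices, which you in any case verify by identifying it with the non-negative $f'$.
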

\begin{proof}
    It is clear that $|V(G)| \geq 2$. Suppose that $|S| < 2$. If $S = \0$, then let $u \in V(\bdry{G})$ be an arbitrary vertex, and if $|S| = 1$, then let $u \in V(\bdry{G})$ be a neighbor of the vertex in $S$, which exists since $G$ is connected. Set $S' \defeq S \cup \set{u}$ and $I' \defeq I \setminus \set{u}$. Since $|S'| > |S|$, by the choice of our counterexample, Theorem~\ref{thm:inductive} holds with $G$, $S'$, and $I'$ in place of $G$, $S$, and $I$. Since $|S'| \leq 2$, no vertex in $I'$ can have $3$ neighbors in $S'$, so the graph $G - S' = G - S - u$ is weakly $f'$-degenerate, where for each $v \in V(G - S - u)$,
    \[
        f'(v) \,\defeq\, f(v) - \deg_{\set{u}} (v).
    \]
    As $\del(G - S,f,u) = (G - S - u, f')$, it follows that $G - S$ is weakly $f$-degenerate, a contradiction.
\end{proof}

    \subsection{Separating paths and $\ell$-chords}

Next, we introduce the notion of an {$\ell$-chord} in $G$, which will play a key role in the remainder of the proof. Informally, an $\ell$-chord is a path of length $\ell$ in $G$ that joins two vertices on the boundary of the outer face and breaks $G$ into two pieces. (Recall that the length of a path $P$ is $|E(P)| = |V(P)| - 1$.)

\begin{definition}[Separating paths and $\ell$-chords]\label{seppathdef}
    Let $P$ be a path in $G$ with endpoints $u$, $v \in V(\bdry{G})$. We say that \emphd{$P$ separates $G$ into graphs $G_1$ and $G_2$} if the following hold:
\begin{itemize}
    \item $G_1$ and $G_2$ are induced connected subgraphs of $G$ with the inherited plane embedding,
    \item $V(G_1) \cap V(G_2) = V(P)$, $V(G_1) \cup V(G_2) = V(G)$,  and $E(G_1) \cup E(G_2) = E(G)$,
    \item $|V(G_i)| < |V(G)|$ for each $i \in \set{1,2}$, and
    \item $P$ is in the outer face boundary of both $G_1$ and $G_2$. 
\end{itemize}
Throughout, we adopt the convention that the graphs $G_1$ and $G_2$ are chosen so that \begin{equation}\label{eq:choice}
    |V(G_1) \cap S| \,\geq\, |V(G_2) \cap S|.
\end{equation}
By Claim~\ref{claim:S2}, \eqref{eq:choice} in particular implies that $|V(G_1) \cap S| \geq 2$. If $P$ separates $G$, we call $P$ a \emphd{separating path}. A separating path of length $\ell$ is called an \emphd{$\ell$-chord}. 
See Figure~\ref{fig:2chord} for an illustration.
\end{definition}
\begin{figure}[t]
    \centering

\begin{tikzpicture}

        \node[invisible] (3) at (1.5,1.7) {};
        \node[invisible] (4) at (2.5,0.8) {};
        \node[invisible] (7) at (-1.5,-1.7) {};
        \node[invisible] (8) at (-2.5,-0.8) {};
        \node[white] (1) at (0,1.8) {$u$};
        \node[white] (2) at (1,1.8) {};
        \node[white] (9) at (0,0) {$x$};
        \node[white] (5) at (0,-1.8) {$v$};
        \node[white] (6) at (-1,-1.8) {};

        \node[] (10) at (2.1,1.5) {$\ddots$};
        \node[] (11) at (-2.1,-1.2) {$\ddots$};
        \node[] (12) at (1,0) {$G_2$};
        \node[] (13) at (-1,0) {$G_1$};

        \draw[black] (1)--(2); 
        \draw[black] (2)--(3);
        \draw[] (4) to [out=-80,in=0] (5); 
        \draw[black] (5)--(9); 
        \draw[black] (1)--(9); 
        \draw[black] (5)--(6); 
        \draw[black] (6)--(7); 
        \draw[] (8) to [out=100, in=180] (1); 

\end{tikzpicture}
       \caption{A 2-chord $uxv$. Note that the vertices $u$ and $v$ may be adjacent, in which case the edge $uv$ belongs to both $G_1$ and $G_2$. Both $G_1$ and $G_2$ must include at least one vertex other than $u$, $v$, and $x$.}\label{fig:2chord}
\end{figure}
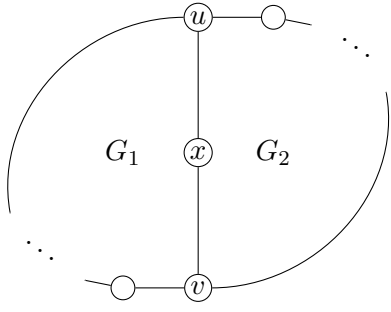

    In the sequel, we only employ Definition~\ref{seppathdef} with $\ell \in \set{0,1,2}$. Note that a $0$-chord is simply a cut vertex in $G$, and if $G$ is $2$-connected, then a $1$-chord is just a chord in the cycle $\bdry{G}$.

    A separating path $P$ splits the graph $G$ into two strictly smaller graphs $G_1$ and $G_2 $. Applying the inductive hypothesis to $G_1$ yields the following: 

    \begin{claim}\label{claim:split}
        Let $P$ be a path that separates $G$ into $G_1$ and $G_2$. For each $v \in V(G_2 - S - V(P))$, define
        \[
            f'(v) \,\defeq\, f(v) - \deg_{V(P) \setminus S}(v).
        \]
        Then either $f'(v) < 0$ for some $v \in V(G_2 - S - V(P))$ or $G_2 - S - V(P)$ is not weakly $f'$-degenerate.
    \end{claim}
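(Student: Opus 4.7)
I plan to argue by contradiction: suppose that $f'(v) \geq 0$ for all $v \in V(G_2 - S - V(P))$ and that $G_2 - S - V(P)$ is weakly $f'$-degenerate. Under this hypothesis I will show that $G - S$ is weakly $f$-degenerate, contradicting the choice of $(G, S, I, f)$.

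The first step is to invoke the minimality of $|V(G)|$ on the strictly smaller plane graph $G_1$. Set $S_1 \defeq V(G_1) \cap S$ and $I_1 \defeq I \cap V(G_1)$. Since $S$ is consecutive in $\bdry{G}$ and the $G_1$-arc of $\bdry{G}$ is preserved in $\bdry{G_1}$, the set $S_1$ consists of at most $3$ consecutive vertices in $\bdry{G_1}$. Moreover, $I_1 \subseteq V(\bdry{G_1}) \setminus S_1$ is independent in $G_1$, and since $I_1 \subseteq I$ and $S_1 \subseteq S$, no vertex of $I_1$ has $3$ neighbors in $S_1$. By the minimality of the counterexample, Theorem~\ref{thm:inductive} applies to $(G_1, S_1, I_1)$ and yields that $G_1 - S_1 = G_1 - S$ is weakly $f_1$-degenerate, where $f_1$ is defined by~\eqref{eq:f} for $(G_1, S_1, I_1)$. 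I then compare $f_1$ with $h \defeq \rest{f}{V(G_1 - S)}$. For any $v \in V(G_1) \setminus V(P) \setminus S$, every neighbor of $v$ lies in $V(G_1)$, so $\deg_{S_1}(v) = \deg_S(v)$ and the boundary- and $I$-status of $v$ coincide in $G$ and $G_1$; hence $f_1(v) = h(v)$. For $v \in V(P) \setminus S$ the situation is more delicate, since $v$ is automatically on $\bdry{G_1}$ even if $v \notin V(\bdry{G})$, and $v$ may pick up one extra $S$-neighbor in $V(G_2) \setminus V(P)$ (noting that $|V(G_2) \cap S| \leq 1$ by the convention~\eqref{eq:choice}). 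A case analysis, possibly after enlarging $I_1$ to include a carefully chosen subset of $V(P) \cap V(\bdry{G})$, should establish that $f_1 \leq h$ pointwise, and then Lemma~\ref{lemma:monotone} upgrades the weak $f_1$-degeneracy to weak $h$-degeneracy of $G_1 - S$.

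To finish, I stitch the two halves together. Let $\sigma$ be a sequence of $\delsave$ operations witnessing weak $h$-degeneracy of $G_1 - S$. Running $\sigma$ inside $(G - S, f)$, each operation acts on a vertex of $V(G_1 - S)$: on $V(G_1 - S)$ the dynamics mirror the $G_1$-context, while for each $v \in V(G_2 - S - V(P))$ the value decreases by at most $\deg_{V(P) \setminus S}(v)$ throughout $\sigma$, ending at no less than $f(v) - \deg_{V(P) \setminus S}(v) = f'(v) \geq 0$. Thus $\sigma$ deletes all of $V(G_1 - S)$ legally, leaving $G_2 - S - V(P)$ with values at least $f'$; by Lemma~\ref{lemma:monotone} and the assumed weak $f'$-degeneracy of $G_2 - S - V(P)$, the remaining vertices can be deleted, yielding weak $f$-degeneracy of $G - S$ and the desired contradiction. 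The main technical obstacle is the comparison step: the naive choice of $I_1$ produces $f_1(v) > h(v)$ whenever some $v \in V(P) \cap V(\bdry{G}) \cap I$ is adjacent to the unique vertex of $V(G_2) \cap S \setminus V(P)$, so the final argument must either adjust $I_1$ to recover the inequality at those vertices or invoke the maximality of $|I|$ (respectively $|S|$) in the counterexample to rule out this local configuration.
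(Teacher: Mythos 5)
Your overall strategy is exactly the paper's: invoke the minimality of the counterexample on $G_1$, replay the resulting deletion sequence inside $(G-S,f)$ (legal because $G_1$ is induced in $G$ and every save target lies in $V(G_1-S)$, so the values on $V(G_1-S)$ evolve as in the $G_1$-run), note that each $v\in V(G_2-S-V(P))$ is never a save target and loses exactly one unit per neighbor in $V(P)\setminus S$, hence ends at $f'(v)\geq 0$, and finish with the assumed weak $f'$-degeneracy of $G_2-S-V(P)$. That part is correct and coincides with the paper's argument step for step. (One small slip: the bound you want is $|S\cap(V(G_2)\setminus V(G_1))|\leq 1$, which follows from $|S\cap V(G_1)|\geq 2$ and $|S|\leq 3$; the quantity $|V(G_2)\cap S|$ itself can equal $2$.)

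The genuine gap is the step you yourself leave open: establishing $f_1\leq \rest{f}{V(G_1-S)}$. You have correctly isolated the only dangerous vertices, namely $v\in V(P)\cap V(\bdry{G})\setminus S$ adjacent to the unique vertex $s\in S\setminus V(G_1)$, for which $\deg_{S_1}(v)=\deg_S(v)-1$. But neither of your proposed repairs is carried out, and neither obviously works: enlarging $I_1$ by $v$ helps only when $v\notin I$ and $v$ has no neighbor already in $I_1$, and it fails outright when $v\in I$, since then $f_1(v)=2-\deg_{S_1}(v)=f(v)+1$ and no choice of $I_1$ can lower the value of an $I$-vertex further; and the extremality of $|S|$ and $|I|$ in the choice of $(G,S,I,f)$ says nothing direct about the auxiliary pair $(G_1,S_1,I_1)$. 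Note that the bad configuration is very constrained --- it forces $|S|=3$ with the middle vertex of $S$ on $P$ and the two end vertices of $S$ strictly on opposite sides --- which is precisely why the paper's later invocations of this claim for $0$-chords and for $2$-chords with $u_2\notin\set{x,z}$ are safe (there $S\subseteq V(G_1)$), while the $1$-chord application additionally leans on the choice of the chord maximizing $|V(G_1)|$. For what it is worth, the paper's own four-line proof justifies ``$G_1-S$ is weakly $f$-degenerate'' solely by the inclusion $V(\bdry{G_1})\supseteq V(\bdry{G})\cap V(G_1)$, which accounts for the drop from budget $4$ to budget $3$ but not for the change in $\deg_S$; so the difficulty you flag is real and is not explicitly dispatched there either. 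As submitted, however, your argument is incomplete at its central step, and the fallback options you list would need to be replaced by an actual analysis of this configuration.
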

    \begin{proof}
        Since $|V(G_1)| < |V(G)|$ and $V(\bdry{G_1}) \supseteq V(\bdry{G}) \cap V(G_1)$, the choice of our counterexample and Lemma~\ref{lemma:monotone} show that the graph $G_1 - S$ is weakly $f$-degenerate, i.e., starting with $(G_1 - S, f)$, we can remove every vertex in $V(G_1) \setminus S$ via legal applications of the $\delsave$ operation. The same sequence of operations starting with $(G-S, f)$ yields the pair $(G_2 - S - V(P), f')$ (the operations remain legal because $G_1$ is an induced subgraph of $G$). If $f'(v) \geq 0$ for all $v$ and $G_2 - S - V(P)$ is weakly $f'$-degenerate, then we can remove all the remaining vertices by legal applications of $\delsave$, showing that $G - S$ is weakly $f$-degenerate, which is a contradiction. 
    \end{proof}

    Using Claim~\ref{claim:split} and then applying the inductive hypothesis to $G_2$, we can show that $G$ has no $0$-chords and no $1$-chords. This is done in the next two claims.

\begin{claim}\label{claim:2conn}
    $G$ is $2$-connected \ep{hence it does not have a $0$-chord}.
\end{claim}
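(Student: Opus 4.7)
The plan is to argue by contradiction: assume $G$ has a cut vertex $x$, split $G$ at $x$ into two strictly smaller plane subgraphs, and combine the inductive applications on each piece to contradict the minimality of $(G,S,I,f)$.

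Fix a component $C$ of $G-x$, and set $G_2$ to be the subgraph of $G$ induced by $V(C)\cup\{x\}$ and $G_1$ the subgraph induced by $V(G)\setminus V(C)$. Both are connected plane subgraphs strictly smaller than $G$, share only $x$, and have no edges between $V(G_1)\setminus\{x\}$ and $V(G_2)\setminus\{x\}$ in $G$. A short planar-topology check on the cyclic order of edges at $x$ in the inherited embedding of $G_2$ \ep{using that $x$ has $G_1$-edges and $G_2$-edges that alternate around $x$} shows $x\in V(\bdry{G_2})$; more generally, $V(\bdry{G_i})\supseteq V(\bdry{G})\cap V(G_i)$ for $i=1,2$. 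Since $S$ is consecutive on $\bdry{G}$ and $|S|\leq 3$, the sets $S_i\defeq S\cap V(G_i)$ are consecutive on $\bdry{G_i}$, have size at most $3$, and inherit the no-three-neighbors-in-$S_i$ condition from $G$; set $I_i\defeq(I\cap V(G_i))\setminus S_i$.

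Next I apply the inductive hypothesis to each $(G_i,S_i,I_i)$, enlarging $S_2$ to $\{x\}$ if $x\notin S$ \ep{allowed since $x\in V(\bdry{G_2})$ and the no-three condition is trivial for $|S_2|\leq 1$}. This yields weak $f_i$-degeneracy witnesses for $G_i-S_i$, where $f_i$ is defined via \eqref{eq:f} on $V(G_i-S_i)$. Using $\deg_{S_i}(v)=\deg_S(v)$ for $v\in V(G_i)\setminus\{x\}$ \ep{no cross-edges exist} together with $V(\bdry{G_i})\supseteq V(\bdry{G})\cap V(G_i)$, a termwise comparison against \eqref{eq:f} yields $f_i\leq f$ on $V(G_i-S_i)\setminus\{x\}$, so by Lemma~\ref{lemma:monotone} the witness remains valid under the original $f$.

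To conclude, I will concatenate the two witnesses. If $x\in S$, the sets $V(G_1-S)$ and $V(G_2-S)$ are disjoint with no edges between them in $G-S$, so the two witnesses run sequentially without interaction. If $x\notin S$, I run the $G_1-S$ witness first \ep{its legal $\delsave$ operations remain legal in $(G-S,f)$ because $G_1$ is an induced subgraph of $G$}, after which only $G_2-\{x\}$ remains, equipped with a residual weight function $f^\star$ satisfying $f^\star(v)\geq f(v)-\deg_{\{x\}}(v)$ for each $v\in V(G_2)\setminus\{x\}$ \ep{since the only $V(G_1)$-neighbor of any such $v$ is $x$}. A short case check against \eqref{eq:f} gives $f^\star\geq f_2$, so Lemma~\ref{lemma:monotone} allows the $G_2$ witness to complete the deletion. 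In either case $G-S$ is weakly $f$-degenerate, contradicting the counterexample. The main technical obstacle will be the planar-topology assertion that $x\in V(\bdry{G_2})$ in the inherited embedding and the careful termwise comparison $f^\star\geq f_2$; both are routine in principle but require some care, since interior vertices of $G$ can become boundary vertices of the smaller pieces.
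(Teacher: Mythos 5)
Your proof is correct and follows essentially the same route as the paper's: the paper also splits $G$ at the cut vertex, applies the inductive hypothesis to the piece not containing $S$ with $S\cap V(G_2)$ \ep{if the cut vertex lies in $S$} or the singleton consisting of the cut vertex \ep{otherwise} playing the role of $S$, and your concatenation step is just an inlined version of Claim~\ref{claim:split}, which the paper proves once and then invokes. The one point to tighten is your choice of the component $C$: your later steps \ep{enlarging $S_2$ to $\set{x}$, running the $G_1$-witness first} presuppose $S\subseteq V(G_1)\setminus\set{x}$ when $x\notin S$, so you must pick $C$ disjoint from $S$ --- possible because $S$ is consecutive on $\bdry{G}$ and hence lies in a single component of $G-x$ --- whereas the paper gets this for free from convention \eqref{eq:choice}.
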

\begin{proof}
    We first note that $|V(G)| \geq 3$, since otherwise $V(G) = S$ by Claim~\ref{claim:S2} and the empty graph $G - S$ is trivially weakly $f$-degenerate. Now suppose $G$ has a cut vertex, i.e., a $0$-chord $u$ separating $G$ into $G_1$ and $G_2$. 
    As in Claim~\ref{claim:split}, for each $v \in V(G_2 - S - u)$, we let
    \[
        f'(v) \,\defeq\, f(v) - \deg_{\set{u} \setminus S} (v). 
    \]


    Suppose first that  $u \in S$. Then $f'(v) = f(v)$ for all $v \in V(G_2 - S)$. By the choice of our counterexample and Lemma~\ref{lemma:monotone}, $G_2 - S$ is weakly $f$-degenerate, in contradiction to Claim~\ref{claim:split}. 

    Now suppose $u \notin S$. Since the vertices of $S$ are consecutive and $|V(G_1) \cap S| \geq |V(G_2) \cap S|$ by \eqref{eq:choice}, we see that in this case $V(G_2) \cap S = \0$ and $f'(v) = f(v) - \deg_{\set{u}}(v) > 0$ for all $v \in V(G_2 - u)$. By the choice of our counterexample, Theorem~\ref{thm:inductive} holds with $G_2$, $\set{u}$, and $(I \cap V(G_2)) \setminus \set{u}$ in place of $G$, $S$, and $I$, i.e., $G_2 - u$ is weakly $f'$-degenerate. This again contradicts Claim~\ref{claim:split}.
\end{proof}

    Note that, by Claim \ref{claim:2conn}, the boundary of every face of $G$, in particular $\partial G$, is a cycle.

    \begin{claim}\label{claim:1chord} $G$ does not have a $1$-chord.
\end{claim}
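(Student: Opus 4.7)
Assume for contradiction that $G$ has a $1$-chord $P = uv$, separating $G$ into $G_1$ and $G_2$ with $|V(G_1) \cap S| \geq |V(G_2) \cap S|$. The plan is to apply Theorem~\ref{thm:inductive} inductively to $G_2$ with a suitable pair $(S', I')$ so that the resulting function $f_2$ coincides with the function $f'$ from Claim~\ref{claim:split} on $V(G_2 - S - V(P))$; together with that claim this produces the desired contradiction. Two structural observations make the comparison routine: first, $V(\bdry{G_2}) = V(\bdry{G}) \cap V(G_2)$, since $\bdry{G_2}$ is the union of the $G_2$-arc of $\bdry{G}$ with the edge $uv$; second, since $P$ separates, every $x \in V(G_2) \setminus V(P)$ satisfies $\deg_S(x) = \deg_{S \cap V(G_2)}(x)$, because any edge from $V(G_2) \setminus V(P)$ to $V(G_1) \setminus V(P)$ would lie in neither $E(G_1)$ nor $E(G_2)$.

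Case-split on $|\{u, v\} \cap S|$. When both or neither of $u, v$ lie in $S$, the convention together with the consecutiveness of $S$ forces $S \cap V(G_2) \subseteq V(P)$; taking $S' \defeq \{u, v\}$ and $I' \defeq (I \cap V(G_2)) \setminus \{u, v\}$ gives $|S'| = 2$, so the hypothesis ``no vertex of $I'$ has $3$ neighbors in $S'$'' is vacuous, and the two facts above show $f_2 = f'$ on $V(G_2 - S - V(P))$, contradicting Claim~\ref{claim:split}. When exactly one of $u, v$, say $u$, lies in $S$, take $S' \defeq (S \cap V(G_2)) \cup \{v\}$ and $I' \defeq (I \cap V(G_2)) \setminus \{v\}$: this $S'$ is a consecutive subset of $V(\bdry{G_2})$ of size at most $3$, since $uv \in E(\bdry{G_2})$ and any further $s \in S \cap V(G_2)$ must be the $\bdry{G}$-neighbor of $u$ on the $G_2$-arc. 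When $|S'| \leq 2$, the argument proceeds as above.

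The main obstacle is the residual subcase $|S'| = 3$, which forces $|S| = 3$ with $u$ in the middle of $S$. Here one must preclude a vertex $x \in I \cap V(G_2 - S')$ adjacent to all three of $u$, $v$, $s$; for such an $x$ we have $f(x) = 2 - \deg_S(x) = 0$, hence $f'(x) = -1$, so Claim~\ref{claim:split} is automatically satisfied and gives no contradiction, while Theorem~\ref{thm:inductive} is inapplicable to $(G_2, S', I')$ as stated. I expect to dispose of this subcase by exploiting the maximality built into our choice of counterexample: the triangles $uvx$ and $usx$ generated by such an $x$ provide enough local rigidity either to swap $x$ out of $I$ in favor of $v$ (or a nearby vertex) and thereby enlarge $I$, contradicting the maximality of $|I|$; or alternatively to resplit $G$ along the $2$-chord $u - x - v$ and arrange the $G_1$-side $\delsave$ operations to save $x$ when $v$ is deleted, so that $x$ reaches the $G_2$-side of the argument with a value large enough for the induction to proceed.
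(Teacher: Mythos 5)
Your setup matches the paper's: split along the chord, compare against Claim~\ref{claim:split} on the $G_2$-side, and apply the inductive hypothesis to $(G_2,S',I')$ with $S'=(S\cap V(G_2))\cup\{u,v\}$. You also correctly isolate the only nontrivial subcase, namely $|S'|=3$ with a vertex $x\in I'$ adjacent to all of $S'$, where indeed $f'(x)<0$ and both halves of the argument break down. But you do not close this subcase, and neither of the escape routes you sketch works. Swapping $x$ out of $I$ in favor of $v$ cannot contradict the maximality of $|I|$, since it does not increase $|I|$, and $v$ cannot be added to $I$ outright because it is adjacent to $x$; note also that shrinking $I$ increases $f$ pointwise and Lemma~\ref{lemma:monotone} only goes one way, so $(G,S,I\setminus\{x\})$ need not remain a counterexample. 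The second route---rerunning the $G_1$-side deletion sequence so that $v$ ``saves'' $x$---founders on exactly the issue the introduction warns about: the inductive hypothesis only guarantees that \emph{some} legal sequence of operations exists for $G_1-S$; you get no control over when $v$ is deleted or whether the value at $v$ exceeds the value at $x$ at that moment, so you cannot arrange for $x$ to be saved. This is the same ``reserving a color for later'' trap that sank the original argument in \cite{WD}.

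The paper's fix is a short extremal argument that your proposal is missing: among all $1$-chords, choose $uw$ to maximize $|V(G_1)|$. If $|S'|=3$, write $S'=\{u,w,x\}$ with, say, $\{w,x\}\subseteq S$ and $x$ the neighbor of $w$ on the $G_2$-arc of $\bdry{G}$. A vertex $y\in V(\bdry{G_2})$ adjacent to all of $u$, $w$, $x$ would make $wy$ another $1$-chord whose $G_1$-side contains $V(G_1)\cup\{y\}$, strictly more than $V(G_1)$, contradicting the choice of $uw$. With this extremal choice in place the bad configuration cannot occur, Theorem~\ref{thm:inductive} applies to $(G_2,S',I')$, and the contradiction with Claim~\ref{claim:split} goes through. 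You should add this choice at the outset and replace the speculative discussion of the residual subcase with it.
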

\begin{proof}
    Suppose it does and let $uw$ be a 1-chord separating $G$ into $G_1$ and $G_2$. We choose $uw$ to maximize $|V(G_1)|$. Let $S' \defeq (S \cap V(G_2)) \cup \set{u,w}$ and $I' \defeq (I \cap V(G_2)) \setminus \set{u,w}$. As in Claim~\ref{claim:split}, for every vertex $v \in V(G_2 - S - u - w)$, we define
    \[
        f'(v) \,\defeq\, f(v) - \deg_{\set{u,w} \setminus S} (v) \,=\, \begin{cases}
            4 - \deg_{S'}(v) &\text{if } v \in V(G_2) \setminus V(\bdry{G_2}),\\
            3 - \deg_{S'}(v) &\text{if } v \in V(\bdry{G_2}) \setminus (S' \cup I'),\\
            2 - \deg_{S'}(v) &\text{if } v \in I'.
            \end{cases}
    \]
    Convention \eqref{eq:choice} implies that the vertices in $S'$ are consecutive on $\bdry{G_2}$ and $|S'| \leq 3$. Furthermore, if $|S'| = 3$, then no vertex in $V(\bdry{G_2})$ may be adjacent to all $3$ vertices in $S'$. Indeed, say $S' = \set{u,w,x}$ where, without loss of generality, $\set{w,x} \subseteq S$. If a vertex $y \in V(\bdry{G_2})$ is adjacent to $u$, $w$, and $x$, then $wy$ is a $1$-chord separating $G$ into graphs $G_1'$ and $G_2'$ with $V(G_1) \subset V(G_1')$, which contradicts the choice of $uw$ as a $1$-chord maximizing $|V(G_1)|$. Therefore, by the choice of our counterexample, we may apply Theorem~\ref{thm:inductive} with $G_2$, $S'$, and $I'$ 
    in place of $G$, $S$, and $I$ to conclude that $G_2 - S' = G_2 - S - u - w$ is weakly $f'$-degenerate. This contradicts Claim~\ref{claim:split}. 
    %
   %
\end{proof}

\subsection{Some corollaries of the absence of $1$-chords}

    The next few claims follow fairly easily from Claim~\ref{claim:1chord}, i.e., the fact that $G$ has no $1$-chord.

\begin{claim}\label{claim:I_maximal}
    $I$ is a maximal independent set in the graph $\bdry{G} - S$.
\end{claim}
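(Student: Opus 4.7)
The plan is to argue by contradiction, using the maximality of $|I|$ in the choice of counterexample. Since $I$ is already independent in $G$, it is certainly independent in $\bdry{G} - S$, so we only need to rule out non-maximality. Suppose there exists $u \in V(\bdry{G}) \setminus (S \cup I)$ with no neighbor in $I$ in the graph $\bdry{G} - S$. I would first observe that $I \cup \set{u}$ is in fact independent in $G$: indeed, any edge from $u$ to a vertex $v \in I$ would have both endpoints on $\bdry{G}$ and outside $S$, and since $G$ is $2$-connected (Claim~\ref{claim:2conn}), $\bdry{G}$ is a cycle, so such an edge would either be an edge of $\bdry{G}$ (contradicting that $u$ has no $I$-neighbor in $\bdry{G} - S$) or a chord of $\bdry{G}$, i.e., a $1$-chord, contradicting Claim~\ref{claim:1chord}.

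Next, I would verify that the tuple $(G, S, I \cup \set{u}, \tilde f)$, where $\tilde f$ is obtained by applying \eqref{eq:f} to $I \cup \set{u}$ in place of $I$, satisfies the hypotheses of Theorem~\ref{thm:inductive}. The only non-routine check is that no vertex of $I \cup \set{u}$ has $3$ neighbors in $S$. For vertices of $I$ this is inherited from the original hypothesis, so only $u$ is at issue, and only when $|S| = 3$. Writing $S = \set{s_1, s_2, s_3}$ with $s_1 s_2, s_2 s_3 \in E(\bdry{G})$, the middle vertex $s_2$ already has its two boundary-cycle neighbors $s_1, s_3$ in $\bdry{G}$, so any additional edge $u s_2$ would be a chord of the cycle $\bdry{G}$ and hence a $1$-chord, contradicting Claim~\ref{claim:1chord}. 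Therefore $u$ is not adjacent to $s_2$, so $u$ has at most $2$ neighbors in $S$.

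Now the tuple $(G, S, I \cup \set{u}, \tilde f)$ satisfies the hypotheses of Theorem~\ref{thm:inductive} and has $|I \cup \set{u}| > |I|$, so by the maximality of $|I|$ in our minimal counterexample, it is not itself a counterexample. Consequently $G - S$ is weakly $\tilde f$-degenerate. But $\tilde f(v) = f(v)$ for $v \neq u$ and $\tilde f(u) = f(u) - 1 \leq f(u)$, so by Lemma~\ref{lemma:monotone}, $G - S$ is also weakly $f$-degenerate, contradicting the fact that $(G, S, I, f)$ is a counterexample.

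The main obstacle, and the only genuinely non-bookkeeping step, is verifying that adding $u$ to $I$ preserves independence in $G$ and does not create a vertex in the new independent set with $3$ neighbors in $S$; both reductions funnel through the absence of $1$-chords established in Claim~\ref{claim:1chord} together with the fact that $\bdry{G}$ is a cycle.
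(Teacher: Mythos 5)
Your proposal is correct and follows essentially the same route as the paper: both deduce from the absence of $1$-chords (Claim~\ref{claim:1chord}) that $I \cup \set{u}$ is independent in $G$ and that $u$ cannot have $3$ neighbors in $S$, then invoke the maximality of $|I|$ in the choice of counterexample together with Lemma~\ref{lemma:monotone}. Your write-up merely spells out in more detail the two uses of Claim~\ref{claim:1chord} that the paper states tersely.
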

\begin{proof}
    Suppose that $u$ is a vertex in $V(\bdry{G}) \setminus (S \cup I)$ such that the set $I \cup \set{u}$ is independent in $\bdry{G} - S$. Claim~\ref{claim:1chord} implies that then $I \cup \set{u}$ is also an independent set in $G$.
    By Claim~\ref{claim:1chord} again, $u$ may not be adjacent to $3$ vertices in $S$. Therefore, by the choice of our counterexample, we may apply Theorem~\ref{thm:inductive} with $G$, $S$, and $I \cup \set{u}$ in place of $G$, $S$, and $I$ and then invoke Lemma~\ref{lemma:monotone} to conclude that $G - S$ is weakly $f$-degenerate, a contradiction.  
\end{proof}

    \begin{claim}\label{claim:inside_exists}
        $V(G) \neq V(\bdry{G})$.
    \end{claim}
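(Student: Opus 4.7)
The plan is to argue by contradiction, assuming $V(G) = V(\bdry{G})$. First I would use Claim~\ref{claim:2conn} to conclude that $\bdry{G}$ is a cycle $C$, and then invoke Claim~\ref{claim:1chord}: any edge $e \in E(G) \setminus E(C)$ would be a chord of $C$ in the plane embedding, splitting $C$ into two arcs that each contain at least one internal vertex; together with $e$ these arcs exhibit $e$ as a $1$-chord, contradicting Claim~\ref{claim:1chord}. Hence $G = C$ is itself a cycle.

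Since the vertices of $S$ are consecutive on $C$ and $|S| \in \set{2,3}$ by Claim~\ref{claim:S2}, the graph $G - S$ is either empty or a path $P = u_1 u_2 \cdots u_m$. The empty case is trivially weakly $f$-degenerate, a contradiction. In the non-empty case, I would note that $\deg_S(u_i) = 0$ for $1 < i < m$, while the path endpoints satisfy $\deg_S(u_1) = \deg_S(u_m) = 1$ when $m \geq 2$, and $\deg_S(u_1) = 2$ when $m = 1$. Combined with~\eqref{eq:f} and the hypothesis that no vertex of $I$ has three neighbors in $S$, this yields $f(u_i) \geq 2$ for internal $u_i$, $f(u_1), f(u_m) \geq 1$ for the path endpoints when $m \geq 2$, and $f(u_1) \geq 0$ when $m = 1$.

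Finally I would peel the path greedily, deleting the vertices in the order $u_1, u_2, \ldots, u_m$ using $\del$. At each step the vertex being removed has at most one remaining neighbor in the current graph, whose $f$-value has been decremented at most once and hence remains nonnegative after the operation; the last vertex has no remaining neighbor, making its deletion trivially legal. This exhibits $G - S$ as strongly (hence weakly) $f$-degenerate, contradicting the choice of $(G, S, I, f)$ as a counterexample.

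I do not expect any genuine obstacle here. The only things to watch are the small cases $m \in \set{1,2}$ and the interaction with $I$ (a vertex of $I$ on the path has $f$-value one smaller than a non-$I$ vertex of the same type, but the inequalities above still go through), both of which follow directly from~\eqref{eq:f}.
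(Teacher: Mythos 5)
Your proof is correct and follows essentially the same route as the paper: use the absence of $1$-chords (plus $2$-connectivity) to conclude $G$ is a cycle, observe that $G-S$ is empty or a path with $f \geq 1$ on it (away from the one-vertex case), and peel it greedily with $\del$. The paper phrases the last step as ``$G-S$ is $1$-degenerate and $f \geq 1$, hence strongly $f$-degenerate,'' which is exactly your explicit end-to-end deletion order; your extra care with the $m \in \set{1,2}$ cases and the $\deg_S$ values of the path endpoints matches the paper's (more tersely stated) computation.
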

    \begin{proof}
        Suppose $V(G) = V(\bdry{G})$. Since $G$ has no $1$-chord by Claim~\ref{claim:1chord}, it follows that $G = \bdry{G}$ is a cycle. As $S \neq \0$ by Claim~\ref{claim:S2}, $G - S$ is either the empty graph or a path. If $G - S$ has at most one vertex, it is $0$-degenerate. Otherwise, it is $1$-degenerate and $f'(v) \geq 1$ for all $v \in V(G - S)$. It follows that, in all cases, $G - S$ is (strongly) $f$-degenerate, a contradiction.
    \end{proof}

    \subsection{Short cycles in $G$}

    Our aim in this section is to describe the structure of $3$- and $4$-cycles in $G$. First, we note that they cannot contain vertices in their interior; in particular, every triangle in $G$ bounds a face.

\begin{claim}\label{claim:cycles}\label{claim:triangles}\label{claim:4cycles}
        $G$ has neither a triangle nor a $4$-cycle with a vertex in its interior.
\end{claim}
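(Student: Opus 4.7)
The plan is to argue by contradiction: suppose $G$ contains a triangle or $4$-cycle $C$ bounding a region that contains some vertex of $G$. Split $G$ along $C$ into two smaller plane graphs, $G_{\text{in}}$ (the subgraph on $V(C)$ together with the vertices of $G$ strictly inside $C$, keeping only the edges of $G$ drawn in the closed disk bounded by $C$) and $G_{\text{out}}$ (the analogue for the closed exterior), and fix the inherited plane embeddings so that $\bdry{G_{\text{in}}} = C$ and $\bdry{G_{\text{out}}} = \bdry{G}$. Since an interior vertex exists, $|V(G_{\text{out}})| < |V(G)|$. The idea is to combine sequences coming from induction on each piece to produce a legal $\delsave$-sequence removing every vertex of $G - S$, contradicting the choice of counterexample.

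First dispose of the case where $C$ is a $4$-cycle with an internal chord: such a chord partitions the interior of $C$ into two triangles, one of which must contain the interior vertex, so we could replace $C$ by that triangle and reduce to the triangle case. Thus we may assume either $|V(C)| = 3$, or $|V(C)| = 4$ with $C$ internally chordless. Fix any vertex $v_\star \in V(C)$, set $S' \defeq V(C) \setminus \{v_\star\}$ and $I' \defeq \varnothing$, and apply Theorem~\ref{thm:inductive} inductively to $(G_{\text{out}}, S, I)$ and to $(G_{\text{in}} - v_\star, S', I')$, obtaining legal $\delsave$-sequences $\sigma_{\text{out}}$ and $\sigma_{\text{in}}$ witnessing the weak degeneracies of $G_{\text{out}} - S$ and $(G_{\text{in}} - v_\star) - S' = G_{\text{in}} - V(C)$, respectively. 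The set $S'$ remains consecutive on the new outer boundary $\bdry{(G_{\text{in}} - v_\star)}$, which additionally picks up the interior neighbors of $v_\star$ as new boundary vertices.

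The crux is to verify that the function $f'$ supplied by Theorem~\ref{thm:inductive} for $G_{\text{in}} - v_\star$ matches the target $f^\star(v) \defeq 4 - \deg_{V(C)}(v)$: for $v \in V(G_{\text{in}}) \setminus V(C)$ not adjacent to $v_\star$, $v$ is still interior in $G_{\text{in}} - v_\star$ and $f'(v) = 4 - \deg_{S'}(v) = 4 - \deg_{V(C)}(v)$; for $v$ adjacent to $v_\star$, $v$ is a new boundary vertex and the boundary downgrade in Theorem~\ref{thm:inductive}'s formula produces $f'(v) = 3 - \deg_{S'}(v) = 3 - (\deg_{V(C)}(v) - 1) = f^\star(v)$. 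Running $\sigma_{\text{out}}$ first on $(G - S, f)$ removes the vertices of $V(G_{\text{out}}) \setminus S$; the only effect on an interior vertex $v \in V(G_{\text{in}}) \setminus V(C)$ is that $f(v)$ drops by $1$ for each deleted neighbor in $V(C) \setminus S$ (no save in $\sigma_{\text{out}}$ can involve an interior vertex, since $\sigma_{\text{out}}$'s operations live in $G_{\text{out}} - S$). This leaves $f(v)$ at exactly $f^\star(v)$ and, since $\deg_{V(C)}(v) \leq 4$, never drives it below zero in the meantime. The resulting state is precisely $(G_{\text{in}} - V(C), f^\star)$, on which $\sigma_{\text{in}}$ legally eliminates every remaining vertex.

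The main obstacle is the $4$-cycle case: $|V(C)| = 4$ exceeds the cap $|S| \leq 3$ in Theorem~\ref{thm:inductive}, so one cannot simply take $V(C)$ as the precolored boundary as in the triangle case. The workaround of applying the inductive hypothesis to $G_{\text{in}} - v_\star$ instead of $G_{\text{in}}$ succeeds only because of the delicate cancellation between the $4 \to 3$ boundary downgrade in the $f$-formula and the missing $\deg_{\{v_\star\}}$ contribution; checking that these two effects match \emph{exactly} (rather than with some slack) is what makes the concatenation $\sigma_{\text{out}}\sigma_{\text{in}}$ work.
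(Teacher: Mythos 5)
Your proposal is essentially the paper's own proof: split along the cycle $C$, apply the inductive hypothesis to the outer piece with the original $(S,I)$, run that deletion sequence inside $G-S$ to reach the inner piece with the residual function $f^\star(v)=4-\deg_{V(C)}(v)$, and then apply the inductive hypothesis to the inner piece minus one cycle vertex $v_\star$ with $S'=V(C)\setminus\{v_\star\}$ and $I'=\0$ --- including the key trick of dropping $v_\star$ to stay under the cap $|S'|\leq 3$. One step is overstated, though: you claim that a vertex of $G_{\text{in}}$ not adjacent to $v_\star$ remains an interior vertex of $G_{\text{in}}-v_\star$, and hence that $f'=f^\star$ exactly. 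At this stage of the argument $G$ is not yet known to be a near-triangulation, so deleting $v_\star$ can expose vertices lying on a non-triangular face incident to $v_\star$ that are not neighbors of $v_\star$; for such a vertex $f'(v)=3-\deg_{S'}(v)<f^\star(v)$. This does not break the argument --- all you need is $f'\leq f^\star$ pointwise, which always holds, followed by Lemma~\ref{lemma:monotone} --- and that is exactly how the paper phrases it; but your closing remark that the two effects must match \emph{exactly} rather than with slack is the wrong emphasis, since slack in the direction $f'\leq f^\star$ is harmless and in fact unavoidable. Your preliminary reduction of chorded $4$-cycles to triangles is not needed for the function computation, although it does conveniently guarantee that $G_{\text{out}}$ (as you defined it, keeping only edges in the closed exterior) is an induced subgraph of $G$, which is what makes rerunning $\sigma_{\text{out}}$ inside $G-S$ legitimate.
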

    \begin{proof}
        Toward a contradiction, suppose that $F$ is either a $3$- or a $4$-cycle in $G$ with at least one vertex in its interior. Pick an arbitrary vertex $a \in V(F)$ and set $S^* \defeq V(F) \setminus \set{a}$. Note that $|S^*| \leq 3$. Let $G'$ be obtained from $G$ by deleting the vertices in the interior of $F$ and let $G^*$ be the subgraph of $G$ induced by the vertices in the interior of $F$ together with $S^*$. Note that $|V(G')| < |V(G)|$ and, since $a \notin V(G^*)$, $|V(G^*)| < |V(G)|$ as well. 
 By the choice of our counterexample, $G'-S$ is weakly $f$-degenerate, so it is possible to delete all vertices of $G'-S$ via a sequence of legal $\delsave$ operations. Applying the same operations starting with $(G - S,f)$ yields the pair $(G^*-S^*, f^*)$, where 
 \[f^*(v) \,\defeq\, f(v) - \deg_{V(F)\setminus S}(v) \,=\, 4 - \deg_{V(F)}(v) \quad \text{for all } v \in V(G^* - S^*).\]
    Note that $S^*$ is a set of at most $3$ consecutive vertices in $V(\bdry{G^*})$. By the choice of our counterexample, we may apply Theorem~\ref{thm:inductive} with $G^*$, $S^*$, and $\0$ in place of $G$, $S$, and $I$ to conclude that $G^* - S^*$ is weakly $f'$-degenerate, where for all $v \in V(G^* - S^*)$,
        \[
        f'(v) \,\defeq\, \begin{cases}
            4 - \deg_{S^*}(v) &\text{if } v \in V(G^*) \setminus V(\bdry{G^*}),\\
            3 - \deg_{S^*}(v) &\text{if } v \in V(\bdry{G^*}) \setminus S^*.
        \end{cases}
     \]
     Since $N_G(a) \cap V(G^*) \subseteq V(\bdry{G^*})$, it follows that $f'(v) \leq f^*(v)$ for all $v \in V(G^* - S^*)$, and thus $G^* - S^*$ is weakly $f^*$-degenerate by Lemma~\ref{lemma:monotone}. Therefore, $G - S$ is weakly $f$-degenerate, a contradiction.
    \end{proof}

    Next we show that $G$ is a \emphd{near-triangulation}, i.e., a plane graph in which every face except possibly the outer one is a triangle. This follows from the fact that, subject to minimizing $|V(G)|$, we chose $G$ to maximize $|E(G)|$.

    \begin{claim}\label{claim:near-triangulation}
        $G$ is a near-triangulation.
    \end{claim}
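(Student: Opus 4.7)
The plan is to contradict the maximality of $|E(G)|$ among counterexamples with $|V(G)|$ minimal. Suppose toward contradiction that some internal face $F$ of $G$ is not a triangle. By Claim~\ref{claim:2conn} its boundary is a cycle $C = v_1 v_2 \cdots v_k$ with $k \geq 4$; by Claim~\ref{claim:cycles} the interior of $F$ contains no vertices; and since $F$ is a face of $G$, $C$ has no chord in $G$.

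I would pick two non-consecutive vertices $u, v \in V(C)$ and add the edge $e \defeq uv$ inside $F$ to obtain $G' \defeq G + e$. Because the outer face of $G'$ coincides with that of $G$, we have $\bdry{G'} = \bdry{G}$, so $S$ and $I$ still lie in $V(\bdry{G'})$ with $S$ consecutive. For the tuple $(G', S, I, f_{G'})$, where $f_{G'}$ is defined on $G'$ via \eqref{eq:f}, to satisfy the hypotheses of Theorem~\ref{thm:inductive}, it suffices that: \emph{(i)} $\{u,v\} \not\subseteq I$, so $I$ remains independent in $G'$; and \emph{(ii)} no vertex of $I$ acquires a third $S$-neighbor, which can only fail if one endpoint of $e$ lies in $S$ and the other lies in $I$ with $\deg_S^G = 2$. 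Condition~\emph{(ii)} is automatic when $|S| \leq 2$; when $|S| = 3$, a short case analysis using $k \geq 4$, the independence of $I$, and the hypothesis that no $I$-vertex has three $S$-neighbors in $G$ shows that a legal chord still exists (for instance, if $k = 4$ then $\{v_1,v_3\}\subseteq I$ and $\{v_2,v_4\}\subseteq I$ cannot both hold, and a case split on the ``forbidden'' patterns for each of the two chords always leaves one chord legal).

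With such $e$ chosen, $|V(G')| = |V(G)|$ and $|E(G')| > |E(G)|$, so by the maximality of $|E(G)|$ the tuple $(G', S, I, f_{G'})$ is not a counterexample: since we arranged for its hypotheses to hold, we conclude that $G' - S$ is weakly $f_{G'}$-degenerate. To transfer this back to $G$, I would take a witnessing sequence $\sigma$ of $\delsave$ operations on $(G' - S, f_{G'})$ and construct a sequence $\sigma'$ on $(G - S, f_{G'})$ by replacing each operation $\delsave_{G'}(x,y)$ in $\sigma$ with $\delsave_G(x,y)$ when $y \in N_G(x)$ and with $\del_G(x)$ otherwise. A short induction---checking in particular the delicate scenario where $\sigma$ saves $y$ in $G'$ while $\sigma'$ does not save $y$ in $G$, which forces $f_G(y) \geq f_{G'}(y) + 1$ before the step---shows that throughout $\sigma'$ the values of the function in $G$ remain pointwise at least the corresponding values in $G'$ under $\sigma$, so each operation in $\sigma'$ is legal. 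Hence $G - S$ is weakly $f_{G'}$-degenerate, and since $f \geq f_{G'}$, Lemma~\ref{lemma:monotone} yields that $G - S$ is weakly $f$-degenerate, the desired contradiction.

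The main obstacle is the case analysis ensuring a valid chord always exists when $|S| = 3$; the graph-monotonicity step (adapting $\sigma$ to $\sigma'$) is routine but needs careful bookkeeping in the saving case.
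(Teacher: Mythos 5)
Your overall strategy---add a chord inside a non-triangular internal face and invoke the maximality of $|E(G)|$---is the same as the paper's, and your careful transfer of the $\delsave$ sequence from $G'-S$ back to $G-S$ (which the paper compresses into ``$G-S$ is a subgraph of $G'-S$'') is sound. However, there is a genuine gap at the very first step: your assertion that ``since $F$ is a face of $G$, $C$ has no chord in $G$'' is false. A chord of the facial cycle $C$ may well be present in $G$, embedded \emph{outside} the face $F$ (e.g.\ the edge $v_1v_3$ drawn on the other side of $C$), and in that case adding $v_1v_3$ inside $F$ creates a parallel edge, so $G'$ is not a simple graph and the minimality of the counterexample cannot be invoked. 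You therefore need to exhibit a pair of non-consecutive vertices of $C$ that is genuinely a non-edge of $G$, and your case analysis for $|S|=3$ (which is itself only sketched) becomes entangled with this: the ``legal'' chord your analysis produces might be exactly the one that already exists in $G$.

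The paper avoids both difficulties with one observation you did not make: since $G$ has no $1$-chord (Claim~\ref{claim:1chord}) and $V(G)\neq V(\bdry{G})$ (Claim~\ref{claim:inside_exists}), the face $F$ must contain a vertex $v_1\notin V(\bdry{G})$. One then takes either $\{v_1,v_3\}$ (if $v_1v_3\notin E(G)$) or $\{v_2,v_4\}$ (a planarity argument shows that if $v_1v_3\in E(G)$ then one of $v_2,v_4$ lies in the interior of a cycle, hence off $\bdry{G}$, and the pair is a non-edge). Because the chosen pair has at least one endpoint off the outer face boundary, the independence of $I$ and the condition that no $I$-vertex has three $S$-neighbors are preserved \emph{automatically}---no case analysis on $S$ and $I$ is needed at all. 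I would recommend restructuring your argument around this observation rather than trying to complete the $|S|=3$ case split, which would additionally have to rule out configurations such as a facial $4$-cycle with both $I$-$I$ diagonals forbidden and the remaining diagonal already present as an edge.
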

    \begin{proof}
        Suppose not and let $F$ be a cycle of length $k \geq 4$ that bounds a non-outer face of $G$. If $V(F) \subseteq V(\bdry{G})$, then, since $G$ has no $1$-chord by Claim~\ref{claim:1chord}, $V(F) = V(\bdry(G))$ and hence $G = \bdry{G} = F$, contradicting Claim~\ref{claim:inside_exists}. 
        Therefore, it must be that $V(F) \setminus V(\bdry{G}) \neq \0$, so we can
        fix a cyclic ordering $v_1$, \ldots, $v_k$ of $V(F)$ with $v_1 \notin V(\bdry{G})$. We claim that there exists a pair of distinct vertices $u$, $w \in V(F)$ such that $uw \notin E(G)$ and 
        $\set{u,w} \not\subseteq V(\bdry{G})$. Indeed, if $v_1v_3 \notin E(G)$, then we can take $\set{u,w} \defeq \set{v_1, v_3}$. Otherwise, since the edge $v_1v_3$ must lie outside the face bounded by $F$, either $v_2$ is in the interior of the cycle $v_1 v_3 v_4 \ldots v_k$ or $v_4$ is in the interior of the cycle $v_1 v_2 v_3$, and, in either case, we can take $\set{u,w} \defeq \set{v_2, v_4}$. 
        %
        %
       %
        %
        %
        Now let $G'$ be the plane graph obtained from $G$ by joining $u$ and $w$ by an edge inside the face bounded by $F$. Then $\bdry{G'} = \bdry{G}$ and, since at least one of $u$, $w$ is not in $V(\bdry{G})$, $I$ is an independent set in $G'$ and no vertex in $I$ is adjacent in $G'$ to $3$ vertices in $S$. As $|V(G')| = |V(G)|$ and $|E(G')| > |E(G)|$, our choice of the counterexample together with Lemma~\ref{lemma:monotone} show that $G' - S$ is weakly $f$-degenerate. Since $G - S$ is a subgraph of $G' - S$, it follows that $G - S$ is weakly $f$-degenerate as well, a contradiction.
    \end{proof}

    It follows immediately from Claims~\ref{claim:cycles} and \ref{claim:near-triangulation} that every $4$-cycle in $G$ has a chord. We can now argue that $\bdry{G}$ is a cycle of length at least $5$; moreover, we can strengthen Claim~\ref{claim:S2} and show that $|S| = 3$:

    \begin{claim}\label{claim:5}
        $\bdry{G}$ is a cycle of length at least $5$ and $|S| = 3$.
    \end{claim}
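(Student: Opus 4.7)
The plan is to establish the two parts of the claim in turn. That $\bdry{G}$ is a cycle is immediate from the $2$-connectedness of $G$ (Claim~\ref{claim:2conn}). To show $|\bdry{G}| \geq 5$, I would rule out the cases $|\bdry{G}| = 3$ and $|\bdry{G}| = 4$ by combining Claim~\ref{claim:inside_exists}, which guarantees at least one vertex in the interior of $\bdry{G}$, with Claim~\ref{claim:cycles}, which forbids any $3$- or $4$-cycle of $G$ containing a vertex in its interior. If $\bdry{G}$ had length $3$ or $4$, the interior vertex provided by Claim~\ref{claim:inside_exists} would directly violate Claim~\ref{claim:cycles}.

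For the second part, I would argue by contradiction, assuming $|S| = 2$ (the only remaining possibility by Claim~\ref{claim:S2}) and exploiting the maximality of $|S|$ in the choice of counterexample. Write $S = \{s_1, s_2\}$ with $s_1 s_2 \in E(\bdry{G})$, and let $s_3$ be the boundary neighbor of $s_2$ distinct from $s_1$; since $|\bdry{G}| \geq 5$, such an $s_3$ exists and lies in $V(\bdry{G}) \setminus S$. Set $S' \defeq S \cup \{s_3\}$ and $I' \defeq I \setminus \{s_3\}$. A brief check verifies that $(G, S', I')$ satisfies the hypotheses of Theorem~\ref{thm:inductive}: $S'$ is a set of three consecutive vertices on $\bdry{G}$; $I' \subseteq V(\bdry{G}) \setminus S'$ is independent in $G$; and no vertex $v \in I'$ has three neighbors in $S'$, since such a $v$ would need to be adjacent to $s_2$ via an edge that is a chord of $\bdry{G}$ (because $s_2$'s only boundary neighbors are $s_1$ and $s_3$, and $v \notin S'$), producing a $1$-chord of $G$ and contradicting Claim~\ref{claim:1chord}. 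By maximality of $|S|$, Theorem~\ref{thm:inductive} then yields that $G - S'$ is weakly $f'$-degenerate, where $f'$ is defined by \eqref{eq:f} with $S'$ and $I'$ in place of $S$ and $I$.

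To derive a contradiction, I would apply $\del(G - S, f, s_3)$ and then the weak-degeneracy sequence for $(G - S', f')$. A short case analysis on the three types of vertices in $V(G - S')$ shows that the function obtained after $\del(s_3)$ agrees exactly with $f'$, so the concatenation witnesses weak $f$-degeneracy of $G - S$ and contradicts the choice of counterexample. The main delicate step, and the one I expect to be the only real obstacle, is verifying legality of $\del(s_3)$: one needs $f(v) \geq 1$ for every neighbor $v$ of $s_3$ in $G - S$. This is automatic when $v$ is an interior vertex ($f(v) \geq 2$) or a boundary vertex outside $I$ ($f(v) \geq 1$); the only potentially problematic case is $v \in I$ with $\deg_S(v) = 2$, but then $v$ would have three neighbors in $S'$, once again yielding a $1$-chord in contradiction to Claim~\ref{claim:1chord}.
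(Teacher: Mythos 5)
Your proposal is correct and follows essentially the same route as the paper: the length bound comes from combining Claim~\ref{claim:inside_exists} with Claim~\ref{claim:cycles}, and $|S|=3$ is forced by extending $S$ with a boundary neighbour $s_3$, invoking the maximality of $|S|$ in the choice of counterexample (with Claim~\ref{claim:1chord} ruling out a vertex of $I'$ with three neighbours in $S'$), and then prepending $\del(s_3)$ to the resulting deletion sequence. Your extra check that the post-$\del(s_3)$ function agrees with \eqref{eq:f} for $(G,S',I')$, which also settles legality, is exactly what the paper's identity $\del(G-S,f,u)=(G-S-u,f')$ encodes.
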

    \begin{proof}
        Since $V(G) \neq V(\bdry{G})$ by Claim~\ref{claim:inside_exists}, there exists a vertex in the interior of the cycle $\bdry{G}$. It follows by Claim~\ref{claim:cycles} that the length of $\bdry{G}$ is at least $5$. Next we argue that $|S| = 3$. We know that $|S| \geq 2$ by Claim~\ref{claim:S2}. Suppose $|S| = 2$ and let $u \in V(\bdry{G}) \setminus S$ be a neighbor of a vertex in $S$. Set $S' \defeq S \cup \set{u}$ and $I' \defeq I \setminus \set{u}$. Since $|S'| > |S|$, by the choice of our counterexample, Theorem~\ref{thm:inductive} holds with $G$, $S'$, and $I'$ in place of $G$, $S$, and $I$. By Claim~\ref{claim:1chord}, no vertex in $I'$ can be adjacent to all $3$ vertices in $S'$, so the graph $G - S' = G - S - u$ is weakly $f'$-degenerate, where for each $v \in V(G - S - u)$,
    \[
        f'(v) \,\defeq\, f(v) - \deg_{\set{u}} (v).
    \]
    As $\del(G - S,f,u) = (G - S - u, f')$, it follows that $G - S$ is weakly $f$-degenerate, a contradiction. 
    \end{proof}

    From this point on, we use Claim~\ref{claim:5} to list the vertices of the cycle $\bdry{G}$ in their cyclic order as \[u_1,\ u_2,\ u_3,\ v_1,\ v_2,\ \ldots,\ v_t,\] where $S = \set{u_1, u_2, u_3}$. Here $t \defeq |V(\bdry{G})| - 3 \geq 2$ by Claim~\ref{claim:5}. We also let $v_{t+1} \defeq u_1$. 

    \subsection{$2$-chords in $G$ are special}

    Now we turn our attention to the structure of the $2$-chords in $G$. Although we cannot simply show they do not exist, we argue that they must have a very special form (see Figure \ref{fig:2chordsspecial}):

\begin{figure}[t]
        \centering

\begin{tikzpicture}
        \node[starnode] (1) at (-1.5,1.6) {$u_1$};
        \node[starnode] (2) at (0,1.8) {$u_2$};
        \node[starnode] (3) at (1.5,1.6) {$u_3$};
        \node[white] (4) at (0,0) {$y$};
        \node[white] (5) at (0,-1.8) {$z$};
        \node[invisible] (6) at (2.2,1.3) {};
        \node[invisible] (7) at (3,0.7) {};
        \node[invisible] (8) at (-2.2,1.3) {};
        \node[invisible] (9) at (-3,0.7) {};

        \node[] (10) at (2.6,1.1) {$\ddots$};
        \node[] (11) at (-2.5,1.1) {$.$};
        \node[] (14) at (-2.65,1) {$.$};
        \node[] (15) at (-2.8,0.9) {$.$};
        \node[] (12) at (1,0) {$G_2$};
        \node[] (13) at (-1,0) {$G_1$};

        \draw[black] (1)--(2); 
        \draw[black] (2)--(3);
        \draw[black] (2)--(4);
        \draw[black] (4)--(5);
        \draw[black] (2)--(3);
        \draw[] (9) to [out=220,in=180] (5); 
        \draw[] (7) to [out=-40, in=0] (5); 
        \draw[black] (3)--(6); 
        \draw[black] (8)--(1); 

\end{tikzpicture}
\begin{tikzpicture}
        \node[white] (1) at (-1.5,1.6) {$x$};
        \node[black] (2) at (0,1.8) {\textcolor{white}{$a$}};
        \node[white] (3) at (1.5,1.6) {$z$};
        \node[white] (4) at (0,0) {$y$};
        \node[invisible] (5) at (0,-1.8) {};
        \node[invisible] (6) at (2.2,1.3) {};
        \node[invisible] (7) at (3,0.7) {};
        \node[invisible] (8) at (-2.2,1.3) {};
        \node[invisible] (9) at (-3,0.7) {};

        \node[] (10) at (2.6,1.1) {$\ddots$};
        \node[] (11) at (-2.5,1.1) {$.$};
        \node[] (14) at (-2.65,1) {$.$};
        \node[] (15) at (-2.8,0.9) {$.$};
        \node[] (13) at (0,-1.2) {$G_1$};

        \draw[black] (1)--(2); 
        \draw[black] (2)--(3);
        \draw[black] (2)--(4);
        \draw[black] (4)--(1);
        \draw[black] (4)--(3);
        \draw[black] (2)--(3);
        \draw[] (9) to [out=220,in=180] (5); 
        \draw[] (7) to [out=-40, in=0] (5); 
        \draw[black] (3)--(6); 
        \draw[black] (8)--(1);

\end{tikzpicture}
       \caption{The two cases described in Claim \ref{claim:2chord}. On the left, vertices in $S$ are shown as $4$-pointed stars and $x = u_2$. On the right, $a \in I$.}\label{fig:2chordsspecial}
\end{figure}
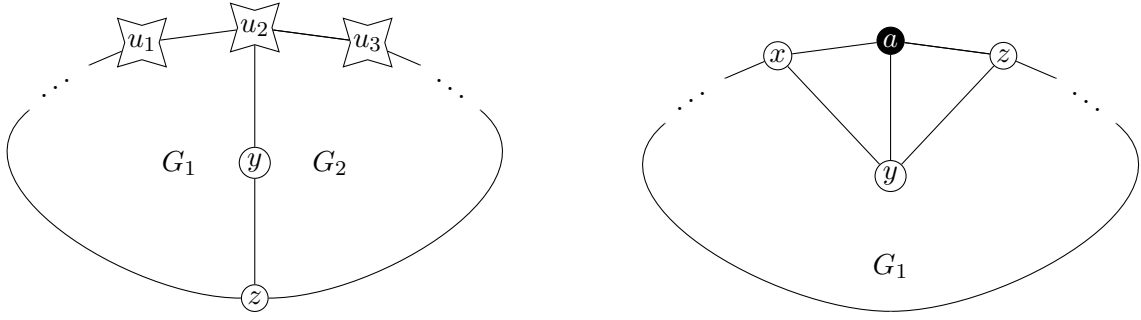

    \begin{claim}\label{claim:2chord}
    Let $xyz$ be a $2$-chord separating $G$ into $G_1$ and $G_2$. Then either $u_2 \in \set{x,z}$ or 
        there exists a vertex $a \in I$ such that $V(G_2) = \set{x,y,z,a}$ and $E(G_2) = \set{xy, yz, ax, ay, az}$. 
    \end{claim}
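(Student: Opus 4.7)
The plan is to apply Theorem~\ref{thm:inductive} inductively to $G_2$: either this produces a direct contradiction with Claim~\ref{claim:split}, or the exceptional clause of the inductive statement hands us the vertex $a$, from which the structure of $G_2$ is then forced by the structural claims already established.

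Assume $u_2 \notin \set{x,z}$. \emph{Step 1.} I would first show that $S \cap V(G_2) \subseteq V(P)$. Since $u_1,u_2,u_3$ are consecutive on $\bdry{G}$, the convention \eqref{eq:choice} rules out $u_2 \in V(G_2) \setminus V(P)$: a short case analysis on the placements of the $\bdry{G}$-neighbors $u_1, u_3$ of $u_2$ (on the $G_1$-arc, on the $G_2$-arc, or at $\set{x,z}$) shows that otherwise $|V(G_2) \cap S|$ would strictly exceed $|V(G_1) \cap S|$. Hence $u_2 \in V(G_1) \setminus V(P)$, and any $S$-vertex lying in $V(G_2)$ must equal $x$ or $z$.

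\emph{Step 2.} I apply Theorem~\ref{thm:inductive} to $G_2$ with $S' \defeq V(P) = \set{x,y,z}$ and $I' \defeq (I \cap V(\bdry{G_2})) \setminus V(P)$. These form a valid input: $S'$ is a set of three consecutive vertices of $\bdry{G_2}$, and $I'$ is independent in $G_2$ because it is a subset of $I$. Minimality of the counterexample legitimizes applying Theorem~\ref{thm:inductive} to $G_2$ since $|V(G_2)| < |V(G)|$. Let $f_{\mathrm{new}}$ denote the function given by \eqref{eq:f} for $(G_2, S', I')$. A short degree computation---using both $S \cap V(G_2) \subseteq V(P)$ and the separation property of $P$ (so that every $G$-neighbor of a vertex in $V(G_2) \setminus V(P)$ lies in $V(G_2)$)---shows that $f_{\mathrm{new}}$ coincides \emph{pointwise} with the function $f'$ from Claim~\ref{claim:split}. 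Consequently, if the main conclusion of Theorem~\ref{thm:inductive} held for $(G_2,S',I')$, then $G_2 - V(P) = G_2 - S - V(P)$ would be weakly $f'$-degenerate, contradicting Claim~\ref{claim:split}. The only way out is the exceptional clause: there exists $a \in I' \subseteq I$ adjacent in $G$ to each of $x$, $y$, and $z$.

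\emph{Step 3.} Given such $a$, the structure of $G_2$ is now forced. Since $a \in V(\bdry{G})$ and both $ax, az \in E(G)$, Claim~\ref{claim:1chord} implies that $ax, az \in E(\bdry{G})$, so $x, a, z$ are three consecutive vertices of $\bdry{G}$; in particular, the $G_2$-arc of $\bdry{G}$ is exactly the path $x\text{-}a\text{-}z$, and the outer face of $G_2$ is bounded by the $4$-cycle $x\text{-}y\text{-}z\text{-}a$. The same claim forbids the chord $xz$. By Claim~\ref{claim:4cycles}, no vertex of $G$ lies in the interior of this $4$-cycle, so $V(G_2) = \set{x,y,z,a}$. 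Because $G$ is a near-triangulation (Claim~\ref{claim:near-triangulation}) and $xz \notin E(G)$, the edge $ay$ must lie inside the $4$-cycle, splitting it into the triangular faces $xya$ and $ayz$. Hence $E(G_2) = \set{xy, yz, ax, ay, az}$, as required.

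The main obstacle is the bookkeeping in Step~2: verifying that $f_{\mathrm{new}}$ equals $f'$ pointwise rather than only being bounded by it, because Lemma~\ref{lemma:monotone} propagates weak degeneracy in only one direction. This equality relies on both the inclusion $S \cap V(G_2) \subseteq V(P)$ from Step~1 and the separation property of $P$; once it is in hand, the exceptional clause of Theorem~\ref{thm:inductive} hands us the vertex $a$, after which planarity and the previously established reducible configurations do the rest.
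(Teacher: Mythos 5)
Your proposal is correct and follows essentially the same route as the paper: apply the inductive statement to $G_2$ with $\set{x,y,z}$ as the new $S$ (using convention \eqref{eq:choice} to place $S$ inside $G_1$), derive a contradiction with Claim~\ref{claim:split} unless the exceptional vertex $a \in I$ adjacent to $x$, $y$, $z$ exists, and then pin down $V(G_2)$ and $E(G_2)$ via Claims~\ref{claim:1chord} and~\ref{claim:cycles}. The only cosmetic differences are that you verify the pointwise agreement of the two functions explicitly (the paper leaves this implicit, and Lemma~\ref{lemma:monotone} would cover the needed direction anyway) and that you route the final structural step through the $4$-cycle $xyza$ rather than the two triangles $axy$ and $azy$.
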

    \begin{proof}
        Suppose $u_2 \notin \set{x,z}$. By our convention \eqref{eq:choice}, this implies that $S \subseteq V(G_1)$ and $S \cap V(G_2) \subseteq \set{x,z}$. For every vertex $v \in V(G_2 - x - y - z)$, define
        \[
            f'(v) \,\defeq\, f(v) - \deg_{\set{x,y,z} \setminus S} (v). 
        \] 
        If no vertex in $I \cap V(G_2)$ is adjacent to $x$, $y$, and $z$, then, by the choice of our counterexample, we may apply Theorem~\ref{thm:inductive} with $G_2$, $\set{x,y,z}$, and $(I \cap V(G_2)) \setminus \set{x,z}$ in place of $G$, $S$, and $I$ to conclude that $G_2 - x - y - z$ is weakly $f'$-degenerate, in contradiction to Claim~\ref{claim:split}. Therefore, there is a vertex $a \in I \cap V(G_2)$ adjacent to $x$, $y$, and $z$. Since $G$ has no $1$-chord by Claim~\ref{claim:1chord}, $ax$, $az \in E(\bdry{G})$. Furthermore, by Claim~\ref{claim:cycles}, the triangles $axy$ and $azy$ contain no vertices in their interiors. It follows that $V(G_2) = \set{x,y,z,a}$ and $E(G_2) = \set{xy, yz, ax, ay, az}$, as claimed. 
    \end{proof}

\subsection{The structure around $v_2$ and $v_3$}

\begin{figure}[t]
        \centering

\begin{tikzpicture}

        \node[invisible] (9) at (-5,4.6) {};
        \node[starnode] (11) at (-4.2,4.6) {$u_1$};
        \node[starnode] (10) at (-3.1,4.5) {$u_2$};
        \node[starnode] (1) at (-2,4.3) {$u_3$};
        \node[white] (2) at (-1,4) {$v_1$};
        \node[black] (3) at (0.1,3.6) {\textcolor{white}{$v_2$}};
        \node[white] (4) at (1.1,3.1) {$v_3$};
        \node[black] (5) at (2.1,2.5) {\textcolor{white}{$v_4$}};
        \node[white] (8) at (3,1.7) {$v_5$};
        \node[invisible] (12) at (3.4,1.2) {};
        \node[white] (6) at (-0.7,2.2) {$x$};

        \draw[black] (1)--(2); 
        \draw[black] (9)--(11); 
        \draw[black] (2)--(3); 
        \draw[black] (3)--(4); 
        \draw[black] (4)--(5); 
        \draw[black] (5)--(8); 
        \draw[black] (2)--(6); 
        \draw[black] (6)--(3); 
        \draw[black] (6)--(4); 
        \draw[black] (1)--(10); 
        \draw[black] (10)--(11); 
        \draw[black] (8)--(12); 

\end{tikzpicture}
       \caption{The desired structure of the neighborhood of $v_2$. Here $x$ is not in $V(\bdry{G})$. The only neighbors of $x$ in $V(\bdry{G})$ are $v_1$, $v_2$, and $v_3$. Vertices in $S$ are shown as $4$-pointed stars. Vertices in $I$ are shown in black. 
       }\label{fig:x}
\end{figure}
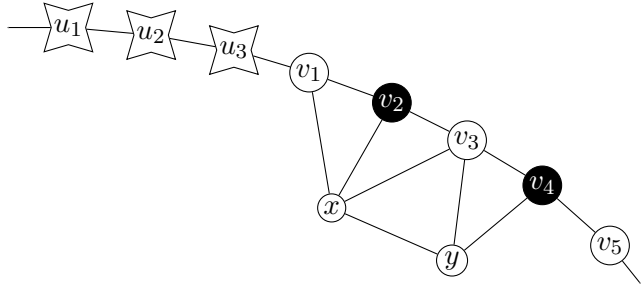

    At this point our aim becomes to precisely determine the structure of the graph $G$ and the set $I$ in the neighborhood of the vertices $v_2$ and $v_3$. Specifically, we will prove that the picture around these vertices is as shown in Figure~\ref{fig:x}. Crucially, $\{v_2,v_4\} \subseteq I$, and $v_2$ has degree $3$ in $G$, which will allow us to handle this vertex in the final stage of the proof.

    \begin{claim}\label{claim:in}
        $v_1 \notin I$, $v_2 \in I$, $v_3 \notin I$.
    \end{claim}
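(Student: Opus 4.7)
The plan is to reduce all three conclusions to the single assertion $v_1 \notin I$. Granting that, Claim~\ref{claim:I_maximal} (maximality of $I$ as an independent set in the path $\bdry{G} - S = v_1 v_2 \cdots v_t$) forces $v_2 \in I$, because $v_2$ is the unique neighbor of $v_1$ in that path; independence of $I$ then forces $v_3 \notin I$.

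To prove $v_1 \notin I$ I would argue by contradiction. Suppose $v_1 \in I$. Claim~\ref{claim:1chord} gives $\deg_S(v_1) = 1$ (with $u_3$ the unique $S$-neighbor), so $f(v_1) = 1$; and $v_1$ must have at least one interior neighbor, because otherwise the interior face containing $u_3 v_1$ would be the triangle $u_3 v_1 v_2$, creating the forbidden 1-chord $u_3 v_2$. Write $x_1, \dots, x_k$ ($k \geq 1$) for the interior neighbors of $v_1$ in cyclic order, with triangles $u_3 v_1 x_1, x_1 v_1 x_2, \dots, x_k v_1 v_2$. The common first step in either sub-case is $\del(v_1)$, which is legal because each neighbor of $v_1$ in $G - S$ has $f$-value at least $2$, and it produces $(G - v_1 - S, f')$ with $f'(v_2) = 2$ and $f'(x_i) = 3 - \deg_S(x_i)$.

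If $v_3 \notin I$, I would set $I' \defeq (I \setminus \set{v_1}) \cup \set{v_2}$. This is independent in $G - v_1$ (the only worry is $v_2 v_3$, but $v_3 \notin I'$) and satisfies $\deg_S \leq 2$ at each vertex. Since $|V(G - v_1)| < |V(G)|$, Theorem~\ref{thm:inductive} applies inductively to $(G - v_1, S, I')$, giving $G - v_1 - S$ weakly $f''$-degenerate for the function $f''$ of \eqref{eq:f}. A short case-by-case check shows $f'' = f'$ throughout $V(G - v_1 - S)$: interior vertices match as $4 - \deg_S$, each $x_i$ matches as $3 - \deg_S$, the new $I'$-vertex $v_2$ matches as $2$, and every other $v_j$ retains its $I$-membership. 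By Lemma~\ref{lemma:monotone}, $G - v_1 - S$ is weakly $f'$-degenerate; prepending $\del(v_1)$ gives the contradiction that $G - S$ is weakly $f$-degenerate.

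If $v_3 \in I$, the previous $I'$ is blocked by the edge $v_2 v_3$. Instead, I would begin with $\delsave(v_2, v_3)$ (legal because $f(v_2) = 3 > 2 = f(v_3)$), which preserves $f(v_3) = 2$ while dropping $f(v_1)$ to $0$; then follow with $\del(v_1)$, still legal because each $x_i$ has $f \geq 2$ after the save. The right inductive set here is $I'' \defeq (I \setminus \set{v_1}) \cup \set{y_1}$, where $y_1 \defeq x_k$ is the interior vertex common to the neighborhoods of $v_1$ and $v_2$. The main obstacle is verifying that $I''$ is admissible: that $y_1$ is not adjacent in $G - v_1 - v_2$ to any vertex of $I \setminus \set{v_1}$, and that $\deg_S(y_1) \leq 2$. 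I expect to dispatch both points via Claim~\ref{claim:2chord}. If $y_1 v_j \in E(G)$ for some $j \geq 3$, the 2-chord $v_1 y_1 v_j$ would, by Claim~\ref{claim:2chord}, force its small side to consist of exactly four vertices with an apex $a \in I$; the only candidate is $a = v_2 \notin I$ when $j = 3$, and for $j \geq 4$ the small side already contains $v_2, \ldots, v_{j-1}$ and is too large. Similarly, if $y_1$ were adjacent to both $u_1$ and $u_3$, the 2-chord $u_1 y_1 u_3$ would fail the four-vertex condition since its small side contains all of $v_1, \ldots, v_t$. With $I''$ admissible, putting $y_1$ in $I''$ exactly compensates for the double decrement of its $f$-value, and a direct calculation gives the resulting function $f''' = f''$ on $V(G - v_1 - v_2 - S)$; Lemma~\ref{lemma:monotone} combined with the two prepended operations then yields the contradiction and completes the proof.
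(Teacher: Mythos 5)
Your proposal is correct and follows essentially the same route as the paper: reduce to $v_1 \notin I$ via Claim~\ref{claim:I_maximal}, split on whether $v_3 \in I$, handle the first case with $\del(v_1)$ and $I' = (I\setminus\{v_1\})\cup\{v_2\}$, and the second with $\delsave(v_2,v_3)$, $\del(v_1)$, and the common neighbor of $v_1,v_2$ promoted into the independent set, ruling out bad adjacencies via Claim~\ref{claim:2chord}. The only (harmless) difference is that you use the near-triangulation fan at $v_1$ to see that this common neighbor always exists, which collapses the paper's two subcases into one.
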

    \begin{proof}
        Recall that $I$ is a maximal independent set in the graph $\bdry{G} - S$ by Claim~\ref{claim:I_maximal}. It follows that at least one of $v_1$, $v_2$ is in $I$, for otherwise $I \cup \set{v_1}$ would be a larger independent set. Thus, to establish the claim, we only need to argue that $v_1 \notin I$, which would imply $v_2 \in I$ and, since $I$ is independent, $v_3 \notin I$.
    
    Toward a contradiction, suppose that $v_1 \in I$. Then $v_2 \notin I$ because $I$ is independent. By Claim~\ref{claim:1chord}, $v_2$ is not adjacent to $u_2$ and $u_3$, so $f(v_2) \geq 2$ (and if $t \geq 3$, then $f(v_2) = 3$). Similarly, $v_1$ has exactly one neighbor in $S$, namely $u_3$, so $f(v_1) = 1$.

    \smallskip
    
    \underline{\emph{Case} $1$:} $v_3 \notin I$ (this includes the possibility that $v_3 = u_1 \in S$).

    \smallskip

    In this case, let $(G - S - v_1,f')\defeq \del(G-S, f, v_1)$ and $I'\defeq (I \setminus \set{v_1}) \cup \set{v_2}$. Since $G$ has no 1-chord by Claim \ref{claim:1chord} and $v_3 \notin I$, it follows that $I'$ is an independent set and no vertex in $I'$ has $3$ neighbors in $S$. Moreover, by Claim~\ref{claim:1chord} again, if $v \in V(G - S - v_1)$ is a vertex such that $f'(v) = f(v) - 1$, then either $v = v_2$ or $v \in V(\bdry{(G-v_1)}) \setminus V(\bdry{G})$. Therefore, Theorem~\ref{thm:inductive} with $G - v_1$, $S$, and $I'$ in place of $G$, $S$, and $I$ shows that $G - S - v_1$ is weakly $f'$-degenerate. Then $G-S$ is weakly $f$-degenerate, a contradiction.

    \smallskip
    
    \underline{\emph{Case} $2$:} $v_3 \in I$.

    \smallskip
    
    Let $(G -S - v_2, f') \defeq \delsave(G-S,f,v_2,v_3)$ and $(G - S - v_1 - v_2, f'') \defeq \del(G - S - v_2, v_1)$. 
    Since $v_3 \in I$, we have $f(v_3) \leq 2 < 3 = f(v_2)$, and so $f'(v_3) = f(v_3)$. Moreover, by Claim~\ref{claim:1chord}, $v_3v_1 \notin E(G)$, so $f''(v_3) = f(v_3)$ as well. Since, by Claim~\ref{claim:1chord} again, $v_3$ is the only vertex in $V(\bdry{G}) \setminus (S \cup \set{v_1, v_2})$ that belongs to the set $N_G(v_1) \cup N_G(v_2)$, we conclude that for each $v \in V(G - S - v_1 - v_2)$,
    \[
        f''(v) \,=\, \begin{cases}
                    f(v) - \deg_{\set{v_1, v_2}}(v) &\text{if } v \notin V(\bdry{G}),\\
                    f(v) &\text{if } v \in V(\bdry{G}).
                    \end{cases}
    \]

    \smallskip
    
    \underline{\emph{Subcase} $2.1$:} $G$ has no vertex adjacent to both $v_1$ and $v_2$.

    \smallskip
    
        In this case, if $v \in V(G - S - v_1 - v_2)$ is a vertex such that $f''(v) < f(v)$, then $f''(v) = f(v) - 1$ and $v \in V(\bdry{(G-v_1 - v_2)}) \setminus V(\bdry{G})$. Therefore, we may apply Theorem~\ref{thm:inductive} with $G - v_1 - v_2$, $S$, and $I \setminus \set{v_1}$ in place of $G$, $S$, and $I$ to conclude that $G-S - v_1 - v_2$ is weakly $f''$-degenerate. But then $G - S$ is weakly $f$-degenerate, a contradiction. 

        \smallskip
    
    \underline{\emph{Subcase} $2.2$:} $G$ has a vertex $u$ adjacent to both $v_1$ and $v_2$.

    \smallskip
        
        By Claim~\ref{claim:triangles} such a vertex $u$ is unique. Let $I' \defeq (I\setminus \{v_1\}) \cup \{u\}$. If $I'$ is an independent set and no vertex in $I'$ is adjacent to all $3$ vertices in $S$, then we may apply Theorem~\ref{thm:inductive} with $G - v_1 - v_2$, $S$, and $I'$ in place of $G$, $S$, and $I$ to again conclude that $G - S - v_1 - v_2$ is weakly $f''$-degenerate. Thus, either $u$ is adjacent to a vertex $u' \in I\setminus \set{v_1}$, or $u$ is adjacent to $u_1$, $u_2$, and $u_3$. In the former case, $v_1uu'$ is a 2-chord that violates Claim \ref{claim:2chord}, since $u_2 \notin \set{v_1, u'}$ and $v_2 \notin I$. 
        In the latter case, $v_1uu_1$ is similarly a 2-chord that violates Claim \ref{claim:2chord}.
    \end{proof}



    \begin{claim}\label{claim:8}
        $\partial G$ is a cycle of length at least $8$ \ep{i.e., $t \geq 5$} and $v_4 \in I$, $v_5 \notin I$.
    \end{claim}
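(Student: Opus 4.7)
I would split the claim into three pieces. The implication ``$t \geq 5$ and $v_4 \in I \Rightarrow v_5 \notin I$'' is immediate: under $t \geq 5$, $v_5$ is a genuine vertex of $\bdry G \setminus S$, and since $v_4 v_5 \in E(\bdry G)$ and $I$ is independent, $v_4 \in I$ forces $v_5 \notin I$. So the substance lies in proving $v_4 \in I$ and $t \geq 5$.

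I would first try to prove $v_4 \in I$ by contradiction. Suppose $v_4 \notin I$. By Claim~\ref{claim:I_maximal} ($I$ is maximal in $\bdry G - S$) and Claim~\ref{claim:in} ($v_3 \notin I$), $v_4$ must have a neighbor in $I$ among its two $\bdry G$-neighbors $v_3$ and $v_5$; the only candidate is $v_5$. If $t = 4$ then $v_5 = u_1 \in S$, so no such neighbor exists, and $I \cup \set{v_4}$ would witness a larger independent set, contradicting maximality. Hence $t \geq 5$ and $v_5 \in I$. In this remaining subcase, Claim~\ref{claim:1chord} gives $\deg_S(v_j) = 0$ for $j \in \set{1,\ldots,5}$, and thus $f(v_1) = f(v_3) = f(v_4) = 3$ and $f(v_2) = f(v_5) = 2$. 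The plan is to run a tailored $\delsave$-sequence on $v_1,\ldots,v_5$: a first attempt is $\delsave(v_4, v_5)$ (legal since $f(v_4) = 3 > 2 = f(v_5)$), then $\delsave(v_3, v_2)$ (legal before $v_3$'s value drops; if $v_3 v_4$ forces a drop one reorders, or inserts $\del(v_1)$ first). Claim~\ref{claim:2chord} would be used to rule out that $v_3$ and $v_5$ share a common interior neighbor $w$: otherwise $v_3 w v_5$ is a $2$-chord with $v_4$ on the ``small'' side, and Claim~\ref{claim:2chord} forces $v_4 \in I$, contradicting our assumption. Once these awkward configurations are excluded, Claim~\ref{claim:split} applied to the residual graph (or a direct invocation of the inductive hypothesis on a smaller instance together with Lemma~\ref{lemma:monotone}) should yield the contradiction.

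To prove $t \geq 5$, I would rule out $t \in \set{2,3,4}$ in turn. For $t = 2$ the outer cycle has length $5$, and $v_2 \in I$ is adjacent to $u_1 \in S$, forcing $f(v_2) = 1$; combined with $|S| = 3$, Claim~\ref{claim:1chord}, and the fact that $G$ is a near-triangulation, the structure is so constrained that a direct sequence of $\delsave$ operations (together with Lemma~\ref{lemma:monotone}) deletes $G - S$. The case $t = 3$ is handled similarly, using $v_3 u_1 \in E(\bdry G)$ to drop $f(v_3)$. For $t = 4$ we invoke the previous step to get $v_4 \in I$; since $v_4 u_1 \in E(\bdry G)$, this gives $f(v_4) = 1$. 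A $\delsave$ sequence exploiting the low $f$-values at $v_2$ and $v_4$ (e.g., $\delsave(v_3, v_2)$ followed by $\delsave(v_3\text{-neighbor}, v_4)$ or $\del(v_4)$) then reduces to an instance to which the inductive hypothesis applies on the strictly smaller graph $G - \set{v_1, v_2, v_3, v_4}$.

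The main obstacle is the subcase $t \geq 5$ with $v_4 \notin I$ and $v_5 \in I$: both $v_2$ and $v_5$ are $I$-vertices sitting at $f$-value $2$, and ``saving'' both simultaneously through a sequence of operations that removes $v_3$ and $v_4$ (which are squeezed between them) requires choosing the order of operations so that legality is preserved at each step, and the boundary of the residual graph must fit cleanly into the inductive framework of Theorem~\ref{thm:inductive}. The reducible-configuration arguments of Claims~\ref{claim:2chord} and~\ref{claim:split} will be the primary tools, and the short-$t$ cases, while technically easier, still demand explicit case-work.
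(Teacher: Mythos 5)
There is a genuine gap at the heart of the claim, namely the proof that $v_4 \in I$. Your plan for the decisive subcase ($t \geq 5$, $v_4 \notin I$, $v_5 \in I$) is to delete $v_3$ and $v_4$ while saving at both $v_2$ and $v_5$, but this cannot be made legal: since $v_3 v_4 \in E(G)$ and $f(v_3) = f(v_4) = 3$ while $f(v_2) = f(v_5) = 2$, whichever of $\delsave(v_4,v_5)$ and $\delsave(v_3,v_2)$ is performed second finds its deleted vertex already reduced to value $2$, so the condition $f(u) > f(w)$ fails, the save does not happen, and one of $v_2$, $v_5$ drops to $1$ --- too low for an $I$-vertex on the boundary of the residual graph, which needs value $2$ for the inductive hypothesis. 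Reordering does not help, and you explicitly flag this as ``the main obstacle'' without resolving it. Since this is exactly the case that carries the content of the claim, the proof is incomplete. (A smaller slip: $\deg_S(v_1) = 1$, not $0$, because $v_1 u_3$ is a boundary edge, so $f(v_1) = 2$.)

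The paper avoids the double-save entirely. Assuming $v_4 \notin I$, it performs the single operation $\del(G-S,f,v_2)$ (no save at all) and applies Theorem~\ref{thm:inductive} to $G - v_2$ with the independent set $I' = (I \setminus \set{v_2}) \cup \set{v_1, v_3}$: the unit drop at $v_1$ and $v_3$ caused by deleting their common neighbor $v_2$ is exactly absorbed by re-designating them as $I$-vertices, and $I'$ is independent precisely because $v_4 \notin I$ and $G$ has no $1$-chords. This one-step reduction also disposes of $t=3$. For the length bound, the paper uses a symmetry you miss: reversing the cyclic order of $\bdry{G}$ yields another valid labelling of the same counterexample in which $v_1$ and $v_t$ exchange roles, so Claim~\ref{claim:in} applied to the reversed labelling forces $v_t \notin I$; this rules out $t=2$ (where $v_2 \in I$) and $t=4$ (where $v_4 \in I$ by the previous step) with no case analysis and no ad hoc $\delsave$-sequences. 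Only your first observation --- that $v_5 \notin I$ follows from $v_4 \in I$ and the independence of $I$ --- coincides with the paper's argument and is complete as stated.
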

    \begin{proof}
        We first note that $t \geq 3$. Indeed, if $t = 2$, i.e., $V(\bdry{G}) = \set{u_1, u_2, u_3, v_1, v_2}$, then $v_1 \notin I$ and $v_2 \in I$ by Claim~\ref{claim:in}. But we may reverse the ordering of the vertices on $\bdry{G}$, switching the roles of $v_1$ and $v_2$. As a result, we will have that $v_1 \in I$, contradicting Claim~\ref{claim:in}.
        
        Next, suppose $v_4 \notin I$. 
        Let $(G - S - v_2, f') \defeq \del(G-S, f, v_2)$ 
        and $I' \defeq (I \setminus \{v_2\}) \cup (\{v_1, v_3\})$. 
        Since $v_4 \notin I$ and $G$ has no $1$-chord by Claim~\ref{claim:1chord}, the set $I'$ is independent. The absence of $1$-chords in $G$ also shows that no vertex in $I'$ is adjacent to all $3$ vertices in $S$. Since the only neighbors of $v_2$ in $V(\bdry{G})$ are $v_1$ and $v_3$, if $v \in V(G - S - v_2)$ satisfies $f'(v) = f(v) - 1$, then either $v \in \set{v_1, v_3}$ or $v \in V(\bdry{(G - v_2)}) \setminus V(\bdry{G})$. Hence, we may apply Theorem~\ref{thm:inductive} with $G - v_2$, $S$, and $I'$ in place of $G$, $S$, and $I$ to conclude that $G - S - v_2$ is weakly $f'$-degenerate. But then $G - S$ is weakly $f$-degenerate, which is a contradiction.

        Therefore, $v_4 \in I$ (and, as a consequence, $t \geq 4$). If $t = 4$, then we may again reverse the ordering of the vertices on $\bdry{G}$, switching the roles of $v_1$ and $v_4$. As a result, we will have $v_1 \in I$, contradicting Claim~\ref{claim:in}. Hence, $t \geq 5$. Finally, we have $v_5 \notin I$ since $v_4 \in I$ and $I$ is independent.
        %
       %
    \end{proof}

    As a consequence of Claims~\ref{claim:in}, \ref{claim:8}, and \ref{claim:1chord}, we have
    \[
        f(v_1) \,=\, 2, \quad f(v_2) \,=\, 2, \quad f(v_3) \,=\, 3, \quad f(v_4) \,=\, 2, \quad f(v_5) \,\geq\, 2.
    \]
    In the next claim we locate the vertex $x$ from Figure~\ref{fig:x}.

    \begin{claim}\label{claim:x}
        There exists a unique vertex $x \in V(G) \setminus V(\bdry{G})$ that is adjacent to $v_1$, $v_2$, and $v_3$.
    \end{claim}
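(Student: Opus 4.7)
The plan is to construct $x$ as the third vertex of the unique interior triangular face of $G$ incident to the boundary edge $v_1v_2$, which exists by Claim~\ref{claim:near-triangulation}. I would argue that $x$ cannot lie on $\bdry{G}$: if it did, $x$ would be a common boundary neighbor of $v_1$ and $v_2$, forcing $x \in \set{u_3, v_3}$, and then either $v_2u_3$ or $v_1v_3$ would be a $1$-chord, contradicting Claim~\ref{claim:1chord}. Uniqueness is immediate: any interior vertex $x'$ adjacent to both $v_1$ and $v_2$ is the third vertex of a triangle $v_1v_2x'$, which by Claim~\ref{claim:triangles} bounds a face, but the interior face incident to $v_1v_2$ is unique, so $x' = x$. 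In particular, $x$ is the only interior vertex that could possibly be adjacent to all three of $v_1, v_2, v_3$.

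The core of the proof is then to show $xv_3 \in E(G)$. I would proceed by contradiction, assuming $xv_3 \notin E(G)$. Since $G$ is a near-triangulation, $v_2$ must then have at least two distinct interior neighbors in its cyclic ``fan'': $x$ itself and a second interior neighbor $w$ with $xv_2w$ an interior triangular face (and $w$ interior by Claim~\ref{claim:1chord}). By uniqueness of the interior faces incident to the boundary edges $v_1v_2$ and $v_2v_3$, only the first vertex in $v_2$'s fan can be adjacent to $v_1$ and only the last can be adjacent to $v_3$. The key restriction on other boundary neighbors of fan vertices comes from Claim~\ref{claim:2chord}: for any interior neighbor $y$ of $v_2$ with a boundary neighbor $z$ non-adjacent to $v_2$ on $\bdry{G}$, the $2$-chord $v_2yz$ forces $z = u_2$, because the alternative ``special'' case would require the $I$-vertex $a$ in the small side to be a boundary neighbor of $v_2$, namely $a \in \set{v_1, v_3}$, contradicting Claim~\ref{claim:in}.

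With these structural restrictions in hand, I would complete the contradiction by a $\delsave$ reduction. The plan is to delete $v_2$ and apply the inductive hypothesis to the strictly smaller graph $G - v_2$, with $I' = (I \setminus \set{v_2}) \cup \set{x}$, so that $x$ becomes an $I'$-vertex on the new outer face $\bdry{G - v_2}$; one then checks that $I'$ is independent (using the restrictions on $x$'s boundary neighbors together with Claim~\ref{claim:in}) and that no $I'$-vertex is adjacent to all three vertices of $S$ (by Claim~\ref{claim:1chord}). The main obstacle is the bookkeeping: a naive $\del(v_2)$ gives $f'(v_1) = 1$ while the function $f''$ required by Theorem~\ref{thm:inductive} on $G - v_2$ has $f''(v_1) = 2$, so Lemma~\ref{lemma:monotone} does not apply directly. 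Handling this is expected to require a more delicate sequence of operations --- for instance, first performing $\delsave(v_3, v_2)$ (which is legal since $f(v_3) = 3 > 2 = f(v_2)$, thereby saving $v_2$) and then eliminating $v_2$ using its restricted local structure. Once the bookkeeping goes through, Lemma~\ref{lemma:monotone} produces weak $f$-degeneracy of $G - S$, contradicting our choice of counterexample.
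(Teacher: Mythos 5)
Your structural setup (taking $x$ to be the apex of the internal face on $v_1v_2$, the fan of $v_2$, and the use of Claim~\ref{claim:2chord} to force $z=u_2$) is fine, but the reduction that is supposed to deliver the contradiction has a genuine gap, and it is not the one you flagged. After $\del(v_2)$ the vertex you must worry about is $v_3$, not only $v_1$: its value drops from $f(v_3)=3$ to $2$, while Theorem~\ref{thm:inductive} applied to $G-v_2$ demands $3-\deg_S(v_3)=3$ at $v_3$ unless $v_3$ is placed in $I'$ --- and $v_3$ cannot be placed in $I'$, because $v_3v_4\in E(\bdry{G})$ and $v_4\in I$ by Claim~\ref{claim:8}. (This is exactly why the paper's proof of Claim~\ref{claim:8}, which does delete only $v_2$ with $I'=(I\setminus\set{v_2})\cup\set{v_1,v_3}$, only goes through when $v_4\notin I$.) Your choice $I'=(I\setminus\set{v_2})\cup\set{x}$ is also counterproductive: with only $v_2$ deleted, $x$ retains value $3-\deg_S(x)$, which is precisely what the theorem requires of a non-$I'$ boundary vertex, so $x$ need not enter $I'$; meanwhile $v_1$ \emph{must} enter $I'$ (its value is $1$, and the non-$I'$ requirement is $2$) and is blocked from doing so exactly because your $x$ is adjacent to $v_1$ by construction. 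Finally, the suggested repair $\delsave(v_3,v_2)$ saves the wrong vertex: deleting $v_3$ while protecting $v_2$ drops $v_4$ from $2$ to $1$, below the value $2$ that Theorem~\ref{thm:inductive} requires of the $I$-vertex $v_4$, so that route is also closed.

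The paper avoids all of this by running the argument from the other end. It deletes $v_3$ and $v_2$ together, via $\delsave(v_3,v_4)$ (protecting $v_4$) followed by $\del(v_2)$, and applies the induction to $G-v_2-v_3$ with $I'=(I\setminus\set{v_2})\cup\set{v_1}$; the only obstruction is an interior vertex $x$ adjacent to both $v_2$ and $v_3$, which loses $2$ while only being promoted from interior (quota $4$) to boundary (quota $3$). It then repeats the same deletion with $I'=(I\setminus\set{v_2})\cup\set{v_1,x}$, which is an admissible independent set precisely under the hypothesis that $x$ is \emph{not} adjacent to $v_1$ (using Claim~\ref{claim:2chord} to rule out other $I$-neighbors of $x$), and the resulting contradiction forces $xv_1\in E(G)$. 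If you wish to keep your orientation (apex over $v_1v_2$ first), the salvageable version is to delete $v_2$ and $v_3$ exactly as in the paper and to place the apex of the face over $v_2v_3$, rather than your $x$, into $I'$ together with $v_1$ --- but at that point you have reconstructed the paper's proof with the roles of the two edges exchanged.
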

    \begin{proof}
        Let $(G - S - v_3, f') \defeq \delsave(G-S, f, v_3,v_4)$ and $(G - S - v_2 - v_3,f'') \defeq \del(G -S - v_3, f', v_2)$. 
        Since $f(v_3) = 3 > 2 = f(v_4)$ and $v_2 v_4 \notin E(G)$ by Claim~\ref{claim:1chord}, we have $f''(v_4) = f(v_4)$. Also, $f''(v_1) = 1$ and $f''(v) = f(v)$ for all $v \in V(\bdry{G}) \setminus (S \cup \set{v_1, v_2, v_3, v_4})$. To summarize, for all $v \in V(G - S - v_2 - v_3)$,
        \[
        f''(v) \,=\, \begin{cases}
                    f(v) - \deg_{\set{v_2, v_3}}(v) &\text{if } v \notin V(\bdry{G}),\\
                    f(v) - 1 &\text{if } v = v_1,\\
                    f(v) &\text{if } v \in V(\bdry{G}) \setminus \set{v_1}.
                    \end{cases}
        \]
        
        If there is no vertex in $V(G) \setminus V(\bdry{G})$ adjacent to both $v_2$ and $v_3$, then we apply Theorem~\ref{thm:inductive} with $G - v_2 - v_3$, $S$, and $(I \setminus \set{v_2}) \cup \set{v_1}$ in place of $G$, $S$, and $I$ to conclude that $G - S - v_2 - v_3$ is weakly $f''$-degenerate, which implies that $G - S$ is weakly $f$-degenerate, a contradiction. 
        Therefore, there is a vertex $x \in V(G) \setminus V(\bdry{G})$ adjacent to both $v_2$ and $v_3$, and it is unique by Claim~\ref{claim:cycles}. Suppose $x$ is not adjacent to $v_1$. Let $I' \defeq (I \setminus \set{v_2}) \cup \{v_1, x\}$. Note that $x$ is not adjacent to all $3$ vertices in $S$, as otherwise $u_1xu_3$ would be a 2-chord violating Claim~\ref{claim:2chord}. Moreover, $I'$ is an independent set. Indeed, if $I'$ is not independent, then, since $xv_1 \notin E(G)$ and $G$ has no $1$-chords, $x$ must be adjacent to a vertex $u \in I \setminus \set{v_2}$. But then $v_2xu$ is a 2-chord that violates Claim \ref{claim:2chord}. Therefore, applying Theorem~\ref{thm:inductive} with $G - v_2 - v_3$, $S$, and $I'$ in place of $G$, $S$, and $I$ again yields that $G - S - v_2 - v_3$ is weakly $f''$-degenerate, a contradiction.
    \end{proof}

    From now on, we let $x$ be the vertex given by Claim~\ref{claim:x}. Note that, since the triangles $v_1 v_2 x$ and $v_2 v_3 x$ contain no vertices in their interiors by Claim~\ref{claim:cycles}, we have $N_G(v_2) = \set{v_1, x, v_3}$.

    \begin{claim}\label{claim:no_other_neighbors}
        $x$ is not adjacent to any vertex in $V(\bdry{G})$ apart from $v_1$, $v_2$, and $v_3$.
    \end{claim}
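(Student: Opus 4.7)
The plan is to argue by contradiction: suppose $x$ is adjacent to some $w \in V(\bdry{G}) \setminus \set{v_1, v_2, v_3}$. My strategy is to invoke Claim~\ref{claim:2chord} on a suitable $2$-chord through $x$ with endpoints $w$ and one of $\set{v_1, v_2, v_3}$.

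The cleanest $2$-chord to consider is $v_2 x w$. This is a valid $2$-chord since $v_2, w$ are non-adjacent on $\bdry{G}$ (as $w \notin \set{v_1, v_3}$) and both sides are non-trivial (the $v_1$-side contains $v_1$ and the $v_3$-side contains $v_3$). For $w \neq u_2$, the endpoints do not include $u_2$, so Claim~\ref{claim:2chord} forces $V(G_2) = \set{v_2, x, w, a}$ for some $a \in I$ adjacent to $v_2$, $x$, and $w$. Enumerating: if $w = v_k$ for some $k \geq 4$, the boundary arc $v_2, v_3, \ldots, v_k$ must fit inside $V(G_2)$, so $k = 4$ and $a = v_3$, contradicting $v_3 \notin I$ (Claim~\ref{claim:in}); if $w = u_3$, the small side contains $\set{v_2, v_1, u_3, x}$, forcing $a = v_1$, again contradicting Claim~\ref{claim:in}; if $w = u_1$, the relevant arc contains at least $t + 2 \geq 7$ vertices, far exceeding the required $|V(G_2)| = 4$.

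For $w = u_2$, Claim~\ref{claim:2chord} is vacuous since $u_2$ appears as an endpoint. I therefore turn to the $2$-chord $v_1 x u_2$ and examine its small side $G_2'$ containing $u_3$; its boundary includes the $4$-cycle $v_1, x, u_2, u_3$. If $|V(G_2')| = 4$, the near-triangulation property (Claim~\ref{claim:near-triangulation}) requires a diagonal in this $4$-cycle; since $v_1 u_2$ is forbidden by Claim~\ref{claim:1chord}, the diagonal must be $x u_3$, giving $x \sim u_3$ and reducing to the already-handled case. If $|V(G_2')| > 4$, an interior vertex $y$ of $G$ lies inside $G_2'$; here I apply Theorem~\ref{thm:inductive} inductively to $G_2'$ (which has strictly fewer vertices than $G$) with the $2$-chord $V(P) = \set{v_1, x, u_2}$ playing the role of the new $S$, and translate the resulting deletion sequence back into $G - S$ — noting that $u_3 \in S$ is already absent — carefully monitoring the values of the shared vertices $v_1$ and $x$ so the operations remain legal. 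The main obstacle I anticipate is precisely this last subcase: the inductive deletion inside $G_2'$ reduces $f(v_1)$ and $f(x)$, and one must arrange deletions so that the residual configuration on the $G_1$-side can still be handled by a further invocation of Theorem~\ref{thm:inductive} without violating non-negativity.
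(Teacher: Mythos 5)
Your overall strategy is the same as the paper's: rule out neighbors $w \neq u_2$ by applying Claim~\ref{claim:2chord} to the $2$-chord $v_2xw$, and then dispose of $u_2$ separately via the $4$-cycle $v_1xu_2u_3$. Your case enumeration for $w \neq u_2$ is correct (though you could shortcut it: Claim~\ref{claim:2chord} produces $a \in I$ adjacent to the endpoint $v_2$, and $v_2 \in I$ by Claim~\ref{claim:in}, so independence of $I$ is violated immediately, with no need to locate $a$ on the boundary arc).

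The gap is in your treatment of $w = u_2$. You split on $|V(G_2')|$ and leave the subcase $|V(G_2')| > 4$ unresolved, proposing only a vague plan to re-run Theorem~\ref{thm:inductive} on $G_2'$ with $\set{v_1,x,u_2}$ as the new $S$ while ``carefully monitoring'' the values at $v_1$ and $x$; as stated this is not a proof, and making it one would require redoing the kind of bookkeeping that Claim~\ref{claim:split} and Claim~\ref{claim:2chord} were designed to encapsulate. But the subcase is in fact vacuous, and you should say so: the region of $G_2'$ between the path $v_1xu_2$ and the boundary arc $u_2u_3v_1$ is exactly the interior of the $4$-cycle $v_1xu_2u_3$ (this is a genuine cycle, since $v_1x \in E(G)$ by Claim~\ref{claim:x}, $xu_2$ by hypothesis, and $u_2u_3$, $u_3v_1$ lie on $\bdry{G}$), and Claim~\ref{claim:cycles} says no $4$-cycle of $G$ has a vertex in its interior. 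Hence $|V(G_2')| = 4$ always, and your own argument for that subcase --- no chord $v_1u_2$ by Claim~\ref{claim:1chord}, so near-triangulation (Claim~\ref{claim:near-triangulation}) forces the chord $xu_3$, contradicting the already-settled case $w = u_3$ --- finishes the proof exactly as the paper does. With that one observation inserted, your proof is complete and coincides with the paper's.
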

    \begin{proof}
        If $x$ has a neighbor $u \in V(\bdry{G})\setminus (S \cup \set{v_1, v_2, v_3})$, then $v_2xu$ is a 2-chord that violates Claim \ref{claim:2chord}. 
        Similarly, if $x$ is adjacent to $u_i$ with $i \in \set{1, 3}$, then $v_2 x u_i$ is a 2-chord that violates Claim~\ref{claim:2chord} (here we use that neither $v_1$ nor $v_3$ is in $I$). 
        Finally, suppose that $x$ is adjacent to $u_2$. By Claim~\ref{claim:cycles}, the $4$-cycle $u_2 u_3 v_1 x$ has no vertex in its interior. Since $x$ is not adjacent to $u_3$ and $v_1$ is not adjacent to $u_2$ (because $G$ has no $1$-chords), this $4$-cycle bounds a face of $G$, which is impossible by Claim~\ref{claim:near-triangulation}. 
    \end{proof}

   Since triangles in $G$ have empty interiors by Claim~\ref{claim:cycles}, $N_G(v_2) = \set{v_1, x, v_3}$. 
   We have now achieved the structure shown in Figure~\ref{fig:x} and are ready for the denouement of our proof.

   Let $(G-S-v_3,f') = \delsave(G-S,f,v_3,v_4)$ and $(G-S-v_3-v_1,f''):= \delsave(G-S-v_3,f',v_1,v_2)$. Since $f(v_2) = 3$ by Claim \ref{claim:in}, we have that $f'(v_2) = 1$ and hence $f'(v_2) < 2 = f(v_1) = f'(v_1)$ as $\partial G$ has no 1-chord by Claim \ref{claim:1chord}.  Again by Claim \ref{claim:1chord}, it follows that for all $v \in V(G - S - v_1 - v_2 - v_3)$,
        \[
        f''(v) \,=\, \begin{cases}
                    f(v) - \deg_{\set{v_1,v_3}}(v) &\text{if } v \notin V(\bdry{G}),\\
                    f(v) &\text{if } v \in V(\bdry{G}) \setminus \set{v_1, v_2,v_3}.
                    \end{cases}
        \]
   Moreover, $x$ is the only vertex in $V(G)\setminus V(\partial G)$ adjacent to both $v_1$ and $v_3$, as otherwise there is a 2-chord violating Claim \ref{claim:2chord}.  Additionally, $x$ is not adjacent to any vertex in  $V(G-S-v_1-v_2-v_3) \cap V(\partial G)$ by Claim \ref{claim:no_other_neighbors}. Hence $(I \setminus \set{v_2}) \cup \{x\}$ is an independent set, and so applying Theorem \ref{thm:inductive} with $G-v_1-v_2-v_3$, $S$, and $(I \setminus \{v_2\}) \cup \{x\}$ in place of $G$, $S$, and $I$ shows that $G-S-v_1-v_2-v_3$ is weakly $f''$-degenerate. In other words, starting with $(G-S-v_1-v_2-v_3, f'')$, it is possible to remove every vertex via a sequence of legal applications of $\delsave$. Since $v_2$ has only one neighbor in $G-S-v_1-v_3$ (namely $x$), the same sequence of operations starting with $(G-S-v_1-v_3,f'')$ yields the pair $(H,g)$, where $V(H) = \{v_2\}$ and $g= (v_2 \mapsto 0)$.  It remains to apply the operation $\del(v_2)$ to remove the final vertex and conclude that $G-S-v_3-v_1$ is weakly $f''$-degenerate. But then $G-S$ is weakly $f$-degenerate, a contradiction.

\bigskip

\noindent
\textbf{Acknowledgments.} We are very grateful to Tao Wang for pointing out the error in the proof of \cite[Theorem 1.4]{WD} which prompted this work. We are also grateful to the anonymous referees for carefully reading the paper and providing helpful feedback.

\frenchspacing

\printbibliography

\end{document}